\def\bb#1\eb{\textcolor{blue}
{#1}} %
\def\br#1\er{\textcolor{red}
{#1}} %
\def\bv#1\ev{\textcolor{green}
{#1}} %
\def\bc#1\ec{\textcolor{cyan}
{#1}} %
\def\bal#1\eal{\begin{align}#1\end{align}}                      %
\def\baln#1\ealn{\begin{align*}#1\end{align*}}          
\def\bml#1\eml{\begin{multline}#1\end{multline}}        %
\def\bmln#1\emln{\begin{multline*}#1\end{multline*}}  %
\def\bga#1\ega{\begin{gather}#1\end{gather}}
\def\bgan#1\egan{\begin{gather*}#1\end{gather*}}
\newtheorem{theorem}{Theorem}[section]
\newtheorem{lemma}[theorem]{Lemma}
\newtheorem{proposition}[theorem]{Proposition}
\newtheorem{definition}[theorem]{Definition}
\newtheorem{remark}[theorem]{Remark}
\newcommand{\be}{\begin{equation}}
\newcommand{\ee}{\end{equation}}
\newcommand{\h}{{\mathcal{H}}}
\newcommand{\M}{{\mathcal M}}
\newcommand{\J}{{\mathcal J}}
\newcommand{\elle}{{\mathcal L}}
\newcommand{\<}{\langle}
\renewcommand{\>}{\rangle}
\newcommand{\de}{\mathrm{d}}                        
\newcommand{\N}{\ensuremath{\mathbb{N}}\xspace}     
\newcommand{\R}{\ensuremath{\mathbb{R}}\xspace}     
\title[Connection by geodesics]{Connection by geodesics\\ on globally hyperbolic spacetimes \\
with a lightlike Killing vector field}
\author[R. Bartolo]{R. Bartolo$^1$}\thanks{$^{1,2}$ Partially supported by G.N.A.M.P.A. Research Project 2012
``Analisi Geometrica sulle variet\`a di Lorentz e applicazioni alla
Relativit\`a Generale''.}
\address{Rossella Bartolo\hfill\break\indent
Dipartimento di Meccanica, Matematica e Management\hfill\break\indent
Politecnico di Bari\hfill\break\indent
Via E. Orabona 4, 70125 Bari\hfill\break\indent
Italy}
\email{rossella.bartolo@poliba.it}
\author[A.M. Candela]{A.M. Candela$^2$}\thanks{$^{2,3}$ Partially supported by
the Spanish MICINN Grant FEDER funds MTM2010-18099, with FEDER
funds.}
\address{Anna Maria Candela \hfill\break\indent
Dipartimento di Matematica\hfill\break\indent
Universit\`a degli Studi di Bari ``Aldo Moro''\hfill\break\indent
Via  E. Orabona 4, 70125 Bari\hfill\break\indent
Italy}
\email{annamaria.candela@uniba.it}
\author[J.L. Flores]{J.L. Flores$^3$}\thanks{$^{3}$ Partially supported by
the Regional J. Andaluc\'{\i}a Grant P09-FQM-4496, with FEDER
funds.}
\address{Jos\'e Luis Flores \hfill\break\indent
 Departamento de \'Algebra, Geometr\'{\i}a y
Topolog\'{\i}a\hfill\break\indent Facultad de Ciencias,
Universidad de M\'alaga\hfill\break\indent Campus Teatinos, 29071
M\'alaga\hfill\break\indent Spain} \email{floresj@uma.es}
\subjclass[2000]{53C50, 53C22, 58E10}
\keywords{Lightlike vector field, global hyperbolicity, geodesic
connectedness, Killing vector field, Cauchy hypersurface, stationary spacetime,
gravitational wave, generalized plane wave.}
\date{}
\begin{document}

\begin{abstract}
Given a globally hyperbolic spacetime endowed
with a complete lightlike Killing vector field and a complete
Cauchy hypersurface, we characterize the points which can be
connected by geodesics. A straightforward consequence is the
geodesic connectedness of globally hyperbolic generalized plane
waves with a complete Cauchy hypersurface.
\end{abstract}

\maketitle

\section{Introduction}\label{s1}

During the past years there has been a considerable amount of
research related to the problem of geodesic connectedness of
Lorentzian manifolds (cf. the classical books \cite{bee,o}, the
updated survey \cite{cs} and references therein). This topic has
wide applications in Physics, but for mathematicians its interest
is essentially due to the peculiar difficulty of this natural
problem, which makes it challenging from both an analytical and a
geometrical point of view. In particular, a striking difference
with the Riemannian realm is that no analogous to the Hopf--Rinow
Theorem holds (for a counterexample, cf. \cite[Remark 1.14]{Pe} or
also \cite[p. 150 and Example 7.16]{o}). Thus, up to now,
sufficient conditions for geodesic connectedness have been
established only for a few models of Lorentzian spacetimes.

The ideas in the paper \cite{cfs} led to the following result (cf.
\cite[Theorem 1.1]{cfs}):

\begin{theorem}{\bf [Candela-Flores-S\'anchez]}\label{mainmain}
Let $({\mathcal L},\langle\cdot,\cdot\rangle_{L})$ be a stationary spacetime with a complete
timelike Killing vector field $K$. If ${\mathcal L}$ is globally
hyperbolic with a complete (smooth, spacelike) Cauchy hypersurface
$S$, then it is geodesically connected.
\end{theorem}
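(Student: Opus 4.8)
The plan is to convert the problem into a variational one living on the Cauchy hypersurface $S$, and then to translate the two geometric hypotheses (global hyperbolicity, completeness of $S$) into the functional--analytic properties needed to run critical point theory.

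\emph{Step 1: reduction to standard stationary form.} Let $\{\phi_t\}_{t\in\R}$ be the flow of $K$. Since $K$ is complete and timelike, each of its integral curves is an inextensible timelike curve, hence meets the Cauchy hypersurface $S$ exactly once; therefore $\Phi\colon\R\times S\to\mathcal{L}$, $\Phi(t,p)=\phi_t(p)$, is a bijection, and since $K$ is transverse to the spacelike hypersurfaces $\phi_t(S)$ it is also a local diffeomorphism, hence a diffeomorphism. As every $\phi_t$ is an isometry, $\partial_t$ is a Killing field for $\Phi^*\langle\cdot,\cdot\rangle_L$, so the pulled--back metric is $t$--independent; writing $-\beta:=(\Phi^*\langle\cdot,\cdot\rangle_L)(\partial_t,\partial_t)$ (negative, since $K$ is timelike) one gets the \emph{standard stationary} form
\[
\Phi^*\langle\cdot,\cdot\rangle_L \;=\; -\beta(x)\,\de t^2 \;+\; 2\,\langle\delta(x),\de x\rangle_R\,\de t \;+\; \langle\cdot,\cdot\rangle_R ,
\]
with $\beta\colon S\to(0,+\infty)$ smooth, $\delta$ a smooth vector field on $S$, and $\langle\cdot,\cdot\rangle_R$ a Riemannian metric on $S$ which is precisely the metric induced on $\{0\}\times S\cong S$, hence \emph{complete} by hypothesis (and each slice $\{t\}\times S$ is again a complete spacelike Cauchy hypersurface).

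\emph{Step 2: the reduced action functional.} Fix $z_0=(t_0,x_0)$ and $z_1=(t_1,x_1)$. Along any geodesic $z(s)=(t(s),x(s))$ of $\mathcal{L}$ the function $\langle K,\dot z\rangle_L=-\beta(x)\dot t+\langle\delta(x),\dot x\rangle_R$ is constant (Killing equation plus the geodesic equation); solving for $\dot t$ and imposing $t(1)-t(0)=t_1-t_0$ determines this constant, and hence $\dot t$, as an explicit expression in $x(\cdot)$ only. Substituting it back into the energy $\tfrac12\int_0^1\langle\dot z,\dot z\rangle_L\,\de s$ produces a smooth functional $\mathcal{J}$ on the Hilbert manifold $\Omega^1(x_0,x_1)$ of $H^1$ curves in $S$ joining $x_0$ to $x_1$, whose critical points are in one--to--one correspondence with the geodesics of $\mathcal{L}$ from $z_0$ to $z_1$. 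So it suffices to show that $\mathcal{J}$ has a critical point.

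\emph{Step 3: critical point theory.} Here the hypotheses enter. Completeness of $\langle\cdot,\cdot\rangle_R$ lets one apply Hopf--Rinow on $S$, so that bounded subsets of $\Omega^1(x_0,x_1)$ consist of bounded--length curves and the Hilbert manifold is metrically complete; this is what yields the coercivity of $\mathcal{J}$ on its sublevels. Global hyperbolicity of $\mathcal{L}$ is what forces $\mathcal{J}$ to be bounded from below and to satisfy the Palais--Smale condition: a Palais--Smale sequence $(x_n)$ gives rise to a sequence of ``approximate geodesics'' in $\mathcal{L}$ whose images eventually lie in a fixed causal diamond $J^+(p)\cap J^-(q)$, which is compact exactly because $\mathcal{L}$ is globally hyperbolic, and this compactness both prevents escape to infinity and, together with the coercivity, forces a subsequence of $(x_n)$ to converge in $\Omega^1(x_0,x_1)$. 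With $\mathcal{J}$ bounded below, coercive and Palais--Smale, it attains its infimum at a critical point, yielding a geodesic from $z_0$ to $z_1$; as $z_0,z_1$ were arbitrary, $\mathcal{L}$ is geodesically connected. (A Ljusternik--Schnirelmann argument over the connected components of $\Omega^1(x_0,x_1)$ in fact gives a connecting geodesic in each homotopy class, which is what the plane--wave corollary will use.)

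\emph{Main obstacle.} The delicate point is the implication ``global hyperbolicity $\Rightarrow$ lower boundedness and Palais--Smale for $\mathcal{J}$''. One has to control, uniformly along a Palais--Smale sequence, both the constant of motion $\langle K,\dot z\rangle_L$ and the lengths of the projected curves, and turn the abstract information $\mathcal{J}(x_n)\le c$, $\de\mathcal{J}(x_n)\to0$ into the geometric statement that the corresponding curves of $\mathcal{L}$ are trapped inside a compact causal diamond. Unlike in the classical situations (bounded $\beta$, sublinearly growing $\delta$), no a priori geometric bound is available, and compactness of causal diamonds --- i.e.\ global hyperbolicity itself --- is the only substitute at hand; making this replacement work is the heart of the proof.
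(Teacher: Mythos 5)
Your Steps 1 and 2 reproduce the paper's setup faithfully: the flow of the complete timelike Killing field gives the global splitting $\elle=S\times\R$ with a standard stationary metric whose spatial part is the complete induced metric on $S$ (Proposition \ref{condelta}), and eliminating $\dot t$ via the conserved quantity $\langle K,\dot z\rangle_L$ yields the reduced functional $\J$ on curves in $S$, exactly as in \eqref{dott}--\eqref{Jn'}. The problem is Step 3, which you yourself identify as ``the heart of the proof'' and then do not carry out. The assertion that global hyperbolicity forces $\J$ to be bounded below and Palais--Smale is precisely the nontrivial content of the theorem (the pseudo--coercivity of \cite{cfs}, recalled in Remark \ref{noempty} and proved in detail in Theorem \ref{nuovo}), so leaving it as an acknowledged obstacle means the proof is not complete.

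Moreover, the one mechanism you do sketch for this step is not correct. A sequence $(x_n)$ with $\J(x_n)\le c$ (or a Palais--Smale sequence) corresponds to curves in $\elle$ that satisfy the constraint $\langle \dot z_n,K\rangle_L\equiv C_{z_n}$ but are in no way causal, so there is no reason for their images to lie in the causal diamond $J^+(p)\cap J^-(q)$; compactness of that diamond cannot be invoked to confine them. The actual argument is indirect: assuming $\|\dot x_n\|_2\to\infty$, one first shows the curves must leave every compact subset of $S$ (Claim 1); one then associates to each spatial curve $x_n$ the \emph{future--directed lightlike} curve of Lemma \ref{l} with arrival time $T(x_n)$, and uses compactness of $J^-(q_\varepsilon)\cap(S\times\{t_p\})$ --- a consequence of global hyperbolicity via Proposition \ref{p} --- to force $T(x_n)>\Delta_t+\varepsilon$ (Claim 2); finally a careful algebraic manipulation of $\J$ in terms of $T(x_n)$ shows $\J(x_n)\to+\infty$ (Claim 3), contradicting the bound. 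It is these lightlike comparison curves, not the competing curves themselves, that let causality theory enter, and this substitution is exactly what is missing from your argument. (A smaller point: you also attribute ``coercivity of $\J$'' to the completeness of $S$; completeness only gives that $H^1$--bounded families of curves are uniformly precompact, while boundedness of the sublevels is again part of the global--hyperbolicity argument above.)
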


The interest of this theorem does not only rely on the intrinsic
geometric character and accuracy of its hypotheses (cf.
\cite[Section 6.3]{cfs}), but also on the fact that it is the top
result of a series of works on geodesic connectedness for standard
stationary spacetimes (cf. \cite{bcf, bf,gm,gp,pi}). If one
analyzes the extrinsic hypotheses under which standard stationary
spacetimes become globally hyperbolic (cf. \cite[Corollary
3.4]{s}) and the ones under which they become geodesically
connected (for instance, \cite[Theorem 1.2]{bcf}), one realizes
that the former imply the latter. So, it was natural to wonder if
global hyperbolicity implies geodesic connectedness for
stationary spacetimes, as Theorem \ref{mainmain} finally
confirmed.

Now, observe that Theorem \ref{mainmain} admits a natural limit
case, which consists of assuming the existence of a lightlike,
instead of timelike, Killing vector field. A remarkable family of
spacetimes which falls under this hypothesis is the class of
generalized plane waves. The geodesic connectedness and global
hyperbolicity of these spacetimes have been also studied. In this
case, one also finds that the extrinsic hypotheses which ensure
global hyperbolicity (see \cite[Theorem 4.1]{FS}) imply geodesic
connectedness (see \cite[Corollary 4.5]{cfs1}). So, a natural
question is if Theorem \ref{mainmain} still holds when the Killing
vector field $K$ is lightlike, instead of timelike; i.e.,

\smallskip

{\em taking any globally hyperbolic spacetime endowed with a
complete lightlike Killing vector field and a complete (smooth,
spacelike) Cauchy hypersurface, is it geodesically connected?}

\smallskip

\noindent In general, the answer to this question is negative
(cf. Section \ref{s8} (c)); however, we can characterize
which points can be connected by geodesics in this class of
spacetimes. More precisely, here we prove the following statement:

\begin{theorem} \label{tm}
Let $(\elle,\<\cdot,\cdot\>_L)$ be a globally hyperbolic spacetime
endowed with a complete lightlike Killing vector field $K$ and a
complete (smooth, spacelike) Cauchy hypersurface $S$. Given two
points $p, q \in \elle$, the following statements are equivalent:
\begin{itemize}
\item[{\sl (i)}] $p$ and $q$ are geodesically connected in $\elle$;
\item[{\sl (ii)}] $p$ and $q$ can be connected by a $C^1$ curve $\varphi$ on
$\elle$ such that $\langle \dot\varphi, K(\varphi)\rangle_L$ has
constant sign or  is identically equal to $0$.
\end{itemize}
\end{theorem}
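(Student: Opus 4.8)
\emph{Proof strategy.} The implication $(i)\Rightarrow(ii)$ is immediate and I would dispose of it first: if $\gamma$ joins $p$ to $q$ as a geodesic then, $K$ being Killing, $\tau\mapsto\langle\dot\gamma(\tau),K(\gamma(\tau))\rangle_L$ is constant along $\gamma$, hence of constant sign (or identically zero), so one takes $\varphi=\gamma$. All the substance lies in $(ii)\Rightarrow(i)$, which the plan is to attack, in the spirit of \cite{cfs,cfs1} and of the proof of Theorem~\ref{mainmain}, by reducing it to a variational problem on the Cauchy hypersurface.

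First I would record the rigid structure forced by the hypotheses. A lightlike vector cannot lie in a spacelike subspace, so $K$ is everywhere transverse to $S$; moreover $\langle K,K\rangle_L\equiv0$ together with the Killing equation yields $\nabla_KK=0$, so each integral curve of $K$ is a complete (since $K$ is complete) lightlike geodesic, hence an inextendible causal curve, hence it meets the Cauchy hypersurface $S$ exactly once. Consequently the flow of $K$ produces a diffeomorphism $\elle\cong\R\times S$ carrying $K$ to $\partial_s$, and, as $K$ is Killing with $\langle K,K\rangle_L=0$, the metric takes the $s$-independent form
\[
\langle\cdot,\cdot\rangle_L\;=\;\de s\otimes\omega+\omega\otimes\de s+g_S ,
\]
with $g_S$ the (complete Riemannian) metric induced on $S$ and $\omega=K^\flat|_{TS}$ a nowhere vanishing $1$-form on $S$. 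Since $\langle\dot\varphi,K(\varphi)\rangle_L=\omega(\dot x)$ for every $C^1$ curve $\varphi=(s,x)$, condition $(ii)$ says exactly that the projections $x_p,x_q$ of $p,q$ to $S$ can be joined in $S$ by a curve along which $\omega(\dot x)$ has constant sign.

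Then I would carry out the reduction. Geodesics from $p$ to $q$ are the critical points of the energy $E(\gamma)=\tfrac12\int_0^1\langle\dot\gamma,\dot\gamma\rangle_L\,\de\tau$ on the $H^1$ curves from $p$ to $q$; writing $\gamma=(s,x)$, the Euler--Lagrange equation in the cyclic variable $s$ reads $\tfrac{\de}{\de\tau}\omega(\dot x)=0$, i.e.\ $\omega(\dot x)\equiv C$ with $C=\int_0^1\omega(\dot x)\,\de\tau$, and on this constraint $E$ depends only on $\Delta s:=s(1)-s(0)=s_q-s_p$, reducing to
\[
\widetilde E(x)\;=\;C\,\Delta s\;+\;\tfrac12\int_0^1 g_S(\dot x,\dot x)\,\de\tau
\]
on the Hilbert submanifold $\mathcal N=\{x\in H^1([0,1],S):x(0)=x_p,\ x(1)=x_q,\ \omega(\dot x)\ \text{a.e.\ constant}\}$. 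A Lagrange--multiplier argument as in \cite{cfs} then identifies the critical points of $\widetilde E$ on $\mathcal N$ with the $x$-components of the geodesics from $p$ to $q$ (the multiplier reconstructing the missing coordinate $s$), so it suffices to produce one such critical point. Here $(ii)$ enters exactly as the nonemptiness of $\mathcal N$: one inclusion is trivial, and conversely a curve with $\omega(\dot x)\ge0$ and $\int_0^1\omega(\dot x)\,\de\tau>0$ becomes an element of $\mathcal N$ with $C>0$ after a small perturbation making $\omega(\dot x)>0$ everywhere followed by a reparametrization, while a curve with $\omega(\dot x)\equiv0$ already lies in $\mathcal N$ (with $C=0$). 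Finally, assuming $\mathcal N\ne\emptyset$, I would show that $\widetilde E$ is bounded below, coercive and satisfies the Palais--Smale condition on $\mathcal N$, using completeness of $(S,g_S)$ to keep minimizing sequences in a fixed compact region of $S$ with bounded $H^1$-energy and the metric control provided by global hyperbolicity (cf.\ \cite{cfs,s}) to dominate the linear term $C\,\Delta s$ by $\tfrac12\|\dot x\|_{L^2}^2$; then $\widetilde E$ attains its infimum and a geodesic from $p$ to $q$ exists.

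I expect the hard part to be this last analytic step: transplanting the global-hyperbolicity-driven coercivity and Palais--Smale machinery of the stationary case \cite{cfs} to the present lightlike situation, where the reduced functional involves only the positive semidefinite form $g_S$ together with the affine constraint $\omega(\dot x)\equiv C$, and where the genuinely new feature, absent when $K$ is timelike, is that $\mathcal N$ may be empty---a possibility the statement accounts for precisely through $(ii)$. A secondary technical point is the degenerate regime $C=0$, in which the Lagrange multiplier degenerates and one argues as for generalized plane waves in \cite{cfs1}.
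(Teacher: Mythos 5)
Your structural reduction (the flow of $K$ gives $\elle\cong S\times\R$ with a $t$--independent degenerate metric, $\<K,K\>_L\equiv 0$ forcing $\beta\equiv 0$ and $\delta=K^\flat|_{TS}$ nowhere zero) is exactly the paper's Proposition \ref{condelta}, and your $(i)\Rightarrow(ii)$ is the paper's. But for $(ii)\Rightarrow(i)$ you take a genuinely different route --- a direct constrained variational principle on the degenerate metric itself --- whereas the paper perturbs to the stationary metrics $\<\cdot,\cdot\>_n=\<\cdot,\cdot\>_L-\tfrac1n\,\de t^2$, connects $p,q$ by geodesics $\gamma_n=(x_n,t_n)$ of $\<\cdot,\cdot\>_n$ via an adapted stationary theorem, derives uniform bounds on $\|\dot x_n\|_2$ and $\|\dot t_n\|_2$, and passes to the limit. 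Your route has two genuine gaps, located precisely at the steps you defer. First, the Lagrange--multiplier step: in the stationary case the constraint $\<\dot z,K(z)\>_L\equiv C_z$ contains $\dot t$ (through $-\beta(x)\dot t$), so $\dot t$ is recovered algebraically and the ``natural constraint'' theorem of \cite{gp} applies. Here $\beta\equiv0$, the constraint $\omega(\dot x)\equiv C$ involves only $x$, and $t$ disappears from $\widetilde E$ altogether. A critical point of $\widetilde E$ on $\mathcal N$ gives only the second geodesic equation; the first, $D_s\dot x-\dot t\,F(x)[\dot x]+\ddot t\,\delta(x)=0$, is a system of $\dim S$ equations for the single scalar $t$, and you must exhibit a multiplier with $H^1$ regularity (to make sense of $\ddot t$) and with total increment exactly $\Delta_t$. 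Neither \cite{cfs} nor \cite{gp} covers this degenerate situation, so asserting it works ``as in \cite{cfs}'' is not a proof.

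Second, and more seriously, the compactness. You propose to dominate the linear term by $\tfrac12\|\dot x\|_2^2$ using ``the metric control provided by global hyperbolicity''. Global hyperbolicity provides no pointwise growth control on $\delta$: with $|C|\le\sup_{x(I)}|\delta|\,\|\dot x\|_2$ and $\sup_{x(I)}|\delta|$ unbounded on a noncompact complete $S$, the functional $\widetilde E$ can a priori be unbounded below on $\mathcal N$, and minimizing sequences can escape to infinity. The whole point of \cite{cfs}, and of the paper's Theorem \ref{nuovo}, is that compactness is extracted not from metric bounds but from the causal condition that $J^-(q)\cap(S\times\{t_p\})$ is compact, through the arrival--time functional $T(x)$ of lightlike curves --- an argument that divides by $\beta$ and so does not survive the limit $\beta\to 0$. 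The paper circumvents this by running the causal compactness argument in the nondegenerate spacetimes $\elle_n$ (Proposition \ref{p1}) and then using hypothesis $(ii)$ \emph{quantitatively}: in Lemma \ref{bounded} the reparametrized curve $\tilde y$ with $\omega(\dot{\tilde y})\equiv 1$ is a competitor whose action $\J_n(\tilde y)$ is bounded uniformly in $n$, which, against the minimality of $x_n$, yields the key bound on $\|\dot x_n\|_2$. In your scheme $(ii)$ enters only as nonemptiness of $\mathcal N$, which by itself does not rule out $\inf_{\mathcal N}\widetilde E=-\infty$ or loss of Palais--Smale; given the counterexample in Section \ref{s8}(c), any correct proof must feed $(ii)$ into the estimates. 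Until both steps are supplied, the proposal is a plausible program rather than a proof.
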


Alike Theorem \ref{mainmain}, this result is intrinsic, sharp and
natural. Moreover, it presents nice consistency with previous
results on geodesic connectedness for generalized plane waves. The
proof is based on a limit argument. First, one perturbs the metric
of the spacetime into a sequence of standard stationary metrics
which approach to the original one. Given two points, one uses an
adapted version of Theorem \ref{mainmain} to ensure that they are
geodesically connected for sufficiently advanced metrics of the
sequence. Then, one uses property {\sl (ii)} to provide some
estimates on the sequence of connecting geodesics. Finally, a
thorough limit argument based on these estimates ensures the
existence of a limit connecting geodesic for the original metric.

Besides the geodesic connectedness, other geodesic properties of
stationary spacetimes have been studied in the last decades.
Theorem \ref{tm} inaugurates an interesting line of research
consisting of translating geodesic properties, from stationary
spacetimes to spacetimes with a lightlike Killing vector field, by
using a limit argument similar to the one developed below. The
fine estimates needed to overcome this procedure for the geodesic
connectedness problem, and the fact that this property is only
partially preserved when passing to the limit, suggest that, in
general, this line of research will be an interesting mathematical
challenge.

The rest of this paper is organized as follows. In Section
\ref{sadd} we recall some notations, definitions and background
tools on Lorentzian manifolds, especially on standard stationary
spacetimes. In Section \ref{ss} we explain the main arguments
involved in the intrinsic variational approach to the geodesic
connectedness problem in a stationary spacetime, when a global
splitting is not given a priori. The machinery developed in
Section \ref{ss} is used in Section \ref{s4} to prove Theorem
\ref{nuovo}, an adapted version of Theorem \ref{mainmain}. In
Section \ref{ausi} we apply Theorem \ref{nuovo} to a sequence of
standard stationary spacetimes obtained by perturbing the original
metric. As a consequence, fixed two arbitrary points, a sequence
of connecting geodesics of the perturbed metrics is obtained
(Proposition \ref{p1}). Then, in Section \ref{s3} we deduce some
estimates for these geodesics (Lemmas \ref{bounded} and
\ref{boundedt}) and apply a limit argument to them (Lemma
\ref{strong}) in order to prove Theorem \ref{tm}. The accuracy of
the hypotheses of Theorem \ref{tm} is showed in Section \ref{s8}.
Finally, in Section \ref{s7}, we provide some straightforward
applications of Theorem \ref{tm}, such as the Avez--Seifert result
in this ambient (Proposition \ref{pp}) and the geodesic
connectedness of some generalized plane waves (Theorem \ref{c0}).

\section{Notation and background tools}\label{sadd}

In this section we review some basic notions in Lo\-ren\-tzian
Geo\-me\-try used thro\-ughout the pa\-per (we re\-fer to
\cite{bee, o} for more details).

A {\em Lorentzian manifold} $({\mathcal L},\<\cdot,\cdot\>_L)$
(henceforth often simply denoted by $\mathcal L$) is a smooth
(connected) finite dimensional mani\-fold ${\mathcal L}$ equipped
with a symmetric non--degenerate tensor field $\<\cdot,\cdot\>_L$
of type $(0,2)$ with index $1$. A tangent vector $\zeta\in T_z
{\mathcal L}$ is called {\em timelike} (resp. {\em lightlike};
{\em spacelike}; {\em causal}) if $\<\zeta,\zeta\>_L<0$ (resp.
$\<\zeta,\zeta\>_L=0$ and $\zeta\neq 0$; $\<\zeta,\zeta\>_L>0$ or
$\zeta=0$; $\zeta$ is either timelike or lightlike). The set of
causal vectors at each tangent space has a structure of ``double
cone'' called {\em causal cones}.

A $C^1$ curve $\gamma:I\rightarrow {\mathcal L}$ ($I$ real
interval) is called {\em timelike} (resp. {\em lightlike}; {\em
spacelike}; {\em causal}) when so is $\dot{\gamma}(s)$ for all
$s\in I$. For causal curves, the definition is extended to include
piecewise $C^1$ curves: in this case, the two limit tangent
vectors on the breaks must belong to the same causal cone.

A smooth curve $\gamma:I\rightarrow {\mathcal L}$ is a
{\em geodesic} if it satisfies the equation
\[
D^L_{s}\dot \gamma\ =\ 0,
\]
where $D^L_s$ is the covariant derivative along $\gamma$
associated to the Levi--Civita connection of metric
$\<\cdot,\cdot\>_L$. Any geodesic $\gamma$
satisfies the conservation law
\[
\<\dot \gamma(s),\dot \gamma(s)\>_L \ \equiv\ E_{\gamma} \quad
\hbox{for some constant $E_{\gamma} \in \R$ and all $s \in I$.}
\]
So, its causal character can be directly re-written in terms of
the sign of $E_{\gamma}$. Two points $p$, $q \in \elle$ are {\em
geodesically connected} if there exists a geodesic $\gamma: I
\rightarrow\elle$ such that $\gamma(0) =p$ and $\gamma(1) = q$ (hereafter, $I:=[0,1]$).
This property is equivalent to a variational problem: namely, the
existence of a critical point of the {\em action functional}
\begin{equation}\label{fuu}
f(z)=\frac{1}{2}\int_0^1\langle\dot{z},\dot{z}\rangle_L \, \de s
\end{equation}
in the set $C^1(I,{\mathcal L})$ of all the $C^1$ curves $z: I
\rightarrow\elle$ such that $z(0) =p$ and $z(1) = q$.

A vector field $K$ in $\elle$ is
said {\em complete} if its integral curves are defined on the
whole real line. On the other hand, $K$ is said {\em Killing} if
one of the following equivalent statements holds (cf.
\cite[Propositions 9.23 and 9.25]{o}):
\begin{itemize}
\item[{\sl (i)}] the stages of its local flow consist of
isometries;
\item[{\sl (ii)}] the Lie derivative of
$\<\cdot,\cdot\>_L$ in the direction of $K$ is $0$;
\item[{\sl (iii)}] $\<D_X K,Y\>_L = - \<D_Y K,X\>_L$ for all vector fields
$X,Y$ on $\elle$.
\end{itemize}
If $K$ is a Killing vector field and $\gamma : I \to {\mathcal
L}$ is a geodesic, then there exists
$C_\gamma\in\R$ such that
\begin{equation}\label{vinc}
\<\dot \gamma(s),K(\gamma(s))\>_L\ \equiv\ C_\gamma \quad
\hbox{for all $s \in I$.}
\end{equation}

A {\em spacetime} is a Lorentzian manifold ${\mathcal L}$ with a
prescribed {\em time--orien\-ta\-tion}, that is, a continuous
choice of a causal cone at each point of $\mathcal L$, called {\em
future cone}, in opposition to the non--chosen one, named {\em
past cone}. A causal curve $\gamma$ in a spacetime is called {\em
future} or {\em past directed} depending on the time orientation
of the cone determined by $\dot\gamma$ at each point. Given
$p,q\in\elle$, we say that $p$ is in the causal past of $q$, and
we write $p<q$, if there exists a future--directed causal curve
from $p$ to $q$. Moreover, we denote by $p\leq q$ either $p<q$ or
$p=q$. For each $p\in\elle$, the {\em causal past} $J^-(p)$ and
the {\em causal future} $J^+(p)$ are defined as
\[
J^-(p)\ =\ \{q\in\elle:\ q\leq p\} \quad \hbox{and}\quad J^+(p)\
=\ \{q\in\elle:\ p\leq q\}.
\]

\begin{remark}{\em The causal relations allow one to extend the space
of piecewise $C^1$ causal curves to the space of (non--necessarily smooth) continuous
causal curves, in a way which is appropriate for convergence of curves. Actually, such curves have $H^1$ regularity
(cf. \cite[p. 54]{bee}, \cite[p. 442]{Ge} and also \cite[Definition 2.1, Remarks
2.2 and A.4]{cfs}).}
\end{remark}

A spacetime is called {\em stationary} if it admits a timelike
Killing vector field. There are several equivalent definitions of
global hyperbolicity for a spacetime (cf., e.g., \cite{ms}). Here,
we adopt the following: a spacetime is {\em globally hyperbolic}
if it contains a {\em Cauchy surface}, that is, a subset which is
crossed exactly once by any inextendible timelike curve. According
to the remarkable paper \cite{BeS1}, the Cauchy surface can be
chosen to be a smooth, spacelike hypersurface. In general, any
inextendible causal curve crosses (possibly, along a segment) a
Cauchy surface $S$; if, in addition, $S$ is spacelike (at least
$C^1$), then it crosses $S$ exactly once (cf. \cite[p. 342]{ms}).
Another important property of a spacetime $\elle$ admitting a
Cauchy surface $S$ is that $J^-(p)\cap S$ is compact for every
$p\in \elle$ (cf. \cite[Proposition 6.6.6]{HE}).

In this paper we are concerned with globally hyperbolic spacetimes
admitting a complete causal Killing vector field. The following
proposition, which slightly extends \cite[Theorem 2.3]{cfs},
provides a precise description of the structure of these
spacetimes.

\begin{proposition}\label{condelta}
Let $(\elle,\langle\cdot,\cdot\rangle_L)$ be a globally hyperbolic
spacetime admitting a complete causal Killing vector field $K$.
Then, there exist a Riemannian manifold
$(S,\langle\cdot,\cdot\rangle)$, a differentiable vector field
$\delta$ on $S$ and a differentiable non--negative function
$\beta$ on $S$ such that
\begin{equation}\label{metrica}
\elle=S\times\R\quad \hbox{and}\quad \<\zeta,\zeta'\>_L =
\<\xi,\xi'\> + \<\delta(x),\xi\>\tau' +
\<\delta(x),\xi'\>\tau-\beta(x)\tau\tau',
\end{equation}
for all $ z = (x,t) \in {\mathcal  L}$ and $\zeta = (\xi,\tau),
\zeta'
= (\xi',\tau')\in T_z{\mathcal L} = T_xS \times \R$.\\
Furthermore, if  $K$ is timelike then $\beta$ is non--vanishing,
i.e., $\beta(x) > 0$ for all $x \in S$; if $K$ is lightlike then
$\beta \equiv 0$, $\delta$ is non--vanishing and the metric on
$\elle$ becomes
\begin{equation}\label{metrica1}
\<\zeta,\zeta'\>_L = \<\xi,\xi'\> + \<\delta(x),\xi\>\tau' +
\<\delta(x),\xi'\>\tau,
\end{equation}
for all $ z = (x,t) \in {\mathcal  L}$ and $\zeta = (\xi,\tau),
\zeta' = (\xi',\tau')\in T_z{\mathcal L}= T_xS \times \R$.
\end{proposition}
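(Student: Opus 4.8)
The plan is to start from the general result \cite[Theorem 2.3]{cfs}, which already gives that a globally hyperbolic spacetime with a complete \emph{timelike} Killing vector field splits as in \eqref{metrica} with $\beta$ non-vanishing. First I would observe that the proof of that theorem uses completeness and global hyperbolicity only to produce the splitting $\elle = S\times\R$, where $S$ is a smooth spacelike Cauchy hypersurface and $\partial_t = K$, together with the fact that the flow of $K$ acts by isometries; the precise causal character of $K$ enters only to identify the sign of $\beta(x) = -\<K,K\>_L$. So I would re-run that construction verbatim for a complete \emph{causal} Killing field: take a smooth spacelike Cauchy hypersurface $S$ (which exists by \cite{BeS1}), and use the completeness of $K$ to define the diffeomorphism $S\times\R\to\elle$, $(x,t)\mapsto \Phi_t(x)$, where $\Phi_t$ is the flow of $K$. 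Because each $\Phi_t$ is an isometry, the pulled-back metric is $t$-independent, which forces the general form $\<\zeta,\zeta'\>_L = \<\xi,\xi'\>_x + \<\delta(x),\xi\>\tau' + \<\delta(x),\xi'\>\tau - \beta(x)\tau\tau'$, with $\<\cdot,\cdot\>_x$, $\delta$, $\beta$ smooth on $S$; here $\beta(x) = -\<K,K\>_L(\Phi_t(x))$ is well defined (independent of $t$) and $\delta$ is the restriction to $S$ of the metrically dual data of $K$ along $S$. Positivity of $\<\cdot,\cdot\>_x$ on $T_xS$ is exactly the statement that $S$ is spacelike.

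The one genuinely new point, compared to \cite{cfs}, is the causal (lightlike) case, i.e.\ showing that $\beta\equiv 0$ and $\delta$ is non-vanishing. Since $K$ is causal, $\<K,K\>_L\le 0$ everywhere, so $\beta\ge 0$; the content is that $\beta$ cannot be strictly positive anywhere if $K$ is lightlike somewhere, and in fact must vanish identically. I would argue this using that $K$ is Killing: the function $\beta$ (equivalently $-\<K,K\>_L$) is constant along the integral curves of $K$, and more is true --- I would invoke the standard fact that for a Killing field the quantity $\<K,K\>_L$ satisfies $X\<K,K\>_L = 2\<D_X K, K\>_L = -2\<D_K K, X\>_L$ for all $X$, so its gradient is $-2 D_K K$. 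If $K$ is lightlike at one point it is lightlike on a whole integral curve; one then shows, using that this integral curve is a (pregeodesic) lightlike curve contained in a level set $\beta = 0$ and that $S$ is a Cauchy hypersurface met by it, that $\beta\equiv 0$ on all of $S$. The cleanest route is probably: a complete lightlike Killing field on a globally hyperbolic spacetime has $\<K,K\>_L \equiv 0$ because otherwise the open set $\{\beta>0\}$ would be a stationary region foliated by timelike Killing orbits whose boundary behaviour is incompatible with $S$ being Cauchy and crossed once by each inextendible timelike curve; I would make this rigorous by a connectedness argument on $S$. Finally, once $\beta\equiv 0$, non-vanishing of $\delta$ is forced: if $\delta(x_0)=0$ then at any point over $x_0$ the metric \eqref{metrica1} would be degenerate in the $\tau$-direction (the vector $(0,1)=K$ would be orthogonal to everything and null), contradicting non-degeneracy of $\<\cdot,\cdot\>_L$. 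Substituting $\beta\equiv 0$ into \eqref{metrica} yields \eqref{metrica1}.

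I expect the splitting part to be essentially a citation plus a short adaptation, so the main obstacle is the lightlike dichotomy: proving rigorously that a complete lightlike Killing field on a globally hyperbolic spacetime with a complete spacelike Cauchy hypersurface must have $\<K,K\>_L\equiv 0$ rather than being null only on part of $\elle$. The delicate point is ruling out a "mixed" situation where $\{\<K,K\>_L<0\}$ and $\{\<K,K\>_L=0\}$ are both nonempty; I would handle this by showing $\{\beta = 0\}$ is both open and closed in $S$ (closedness is clear by continuity; openness follows from the Killing identity $\nabla\beta = -2D_KK$ together with the fact that on the null locus $D_KK$ is proportional to $K$, hence tangent to the orbit and "invisible" to $S$), and then conclude by connectedness of $S$. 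A secondary technical care is checking the smoothness/regularity of $S$, $\delta$, $\beta$; this is inherited directly from \cite{BeS1} and \cite[Theorem 2.3]{cfs}, so I would simply cite it.
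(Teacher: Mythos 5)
Your construction of the splitting is exactly the paper's: take a smooth spacelike Cauchy hypersurface $S$ (via \cite{BeS1}), use the completeness of the causal Killing field $K$ to obtain the diffeomorphism $(x,t)\mapsto\Psi_t(x)$ (each integral curve of $K$ is an inextendible causal curve, hence meets the spacelike Cauchy hypersurface exactly once), and use the isometric flow to make the pulled-back metric $t$-independent, with $\beta(x)=-\langle K,K\rangle_L$ and $\delta(x)$ the orthogonal projection of $K$ onto $T_xS$. That part is correct and is essentially the whole content of the paper's proof.

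Where you go astray is in what you call ``the one genuinely new point.'' In this paper (consistently with the conventions of Section 2, where a curve is lightlike when its velocity is lightlike at \emph{every} parameter value), a lightlike Killing vector field is one that is lightlike at every point of $\elle$. Hence in the lightlike branch of the proposition, $\beta(x)=-\langle K,K\rangle_L\equiv 0$ holds by the very definition of $\beta$; nothing remains to prove except the non-vanishing of $\delta$, which follows as you say via non-degeneracy (or, as the paper puts it, because a lightlike vector cannot be $\langle\cdot,\cdot\rangle_L$-orthogonal to the spacelike hyperplane $T_xS$). Worse, the dichotomy you set out to establish --- that a complete causal Killing field on a globally hyperbolic spacetime which is lightlike somewhere must be lightlike everywhere --- is false, so the open/closed argument you sketch cannot be completed. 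For instance, on $\R^2$ with metric $dx^2+2\,dx\,dt-\beta(x)\,dt^2$, where $\beta\ge 0$ is a bounded bump function, $K=\partial_t$ is a complete causal Killing field, timelike where $\beta>0$ and lightlike where $\beta=0$, while $\{t=0\}$ is a complete Cauchy hypersurface (causal vectors satisfy $\tau\neq 0$ and $|\xi|\le\bigl(1+\sqrt{1+\beta}\bigr)|\tau|$, so inextendible causal curves must cross it). Your openness step fails precisely here: on a null orbit one only gets $\nabla\langle K,K\rangle_L=-2D_KK$ proportional to $K$ \emph{on} that orbit (the Killing-horizon identity), which in no way prevents $\langle K,K\rangle_L$ from becoming negative off it. Deleting that excursion, what remains of your argument is the paper's proof.
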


\begin{proof}
Since $\elle$ is a globally
hyperbolic spacetime, it admits a spacelike Cauchy hypersurface
$S$ which becomes a Riemannian manifold when endowed with the induced metric
$\langle\cdot,\cdot\rangle$ from $\langle\cdot,\cdot\rangle_L$. Let us consider the map
\[
\Psi: (x,t)\in S\times \R\mapsto\Psi_t(x)\in \elle,
\]
being $\Psi$ the flow of the complete vector field $K$. Since $K$
is causal, its integral curves are also causal. So, each point of
$\elle$ is crossed by one integral curve of $K$, which crosses $S$
at exactly one point. Therefore, $\Psi$ is a diffeomorphism. As
$K$ is Killing, the pull--back metric
$\Psi^{*}\langle\cdot,\cdot\rangle_{L}$ is independent of $t$.
Hence, taking $\beta(x)=-\langle K(z),K(z)\rangle_L$ and denoting
by $\delta(x)$ the orthogonal projection of $K(z)$ on $T_xS$ for
any $z=(x,t)\in S \times \{t\}$, the metric expression
\eqref{metrica} follows.\\
Furthermore, if $K$ is timelike, then $\beta$ is clearly strictly
positive; instead, if $K$ is lightlike, then $\beta \equiv 0$ and
$\delta$ is non--vanishing (since $K(z)$ cannot be orthogonal to
$T_x S$).
\end{proof}

\begin{remark}\label{remark}{\em For further use, here we
emphasize the following relations, contained in the proof of
previous proposition: for any $z=(x,t)\in S\times \R$ we have
\[
\begin{array}{l}
K\equiv \partial_t,\quad S\equiv S\times\{0\}, \quad \beta(x)=-\langle K(z),K(z)\rangle_{L},\\
\delta(x)\equiv\; \text{orthogonal projection of $K(z)$ on $T_xS$.}
\end{array}
\]
}
\end{remark}

In general, a spacetime as in (\ref{metrica}) with $\beta(x) > 0$
on $S$ is called {\em standard stationary}. For this class of
spacetimes, $K=\partial_t$ is always a complete timelike Killing
vector field. A smooth curve $\gamma=(x,t)$ in a standard
stationary spacetime $\elle$ is a geodesic if and only if it
satisfies the following system of differential equations:
\begin{equation}\label{equa}
\left\{ \begin{array}{ll} { \displaystyle D_s\dot x - \dot
t\,F(x)[\dot x] + \ddot t\,\delta(x)+
{\frac 12}{\dot t}^2\nabla\beta(x) = 0} \\
{\displaystyle \frac{\de}{\de s}\left(\beta(x)\dot t -
\<\delta(x),\dot x\>\right) = 0,}
      \end{array}
      \right.
      \end{equation}
where $D_s$ denotes the covariant derivative along $x$ associated
to the Levi--Civita connection of metric
$\langle\cdot,\cdot\rangle$, and $F(x)$ denotes the linear
(continuous) operator on $T_x S$ associated to the bilinear form
\[
{\rm curl} \,\delta(x)[\xi,\xi'] = \<(\delta'(x))^T[\xi],\xi'\> -
\<\delta'(x)[\xi'],\xi\> \qquad \hbox{for all } \xi,\xi'\in T_x S,
\]
being $\delta'(x)$ the differential map of $\delta(x)$ and
$(\delta'(x))^T$ its transpose (cf., e.g., \cite[Appendix A]{[BGS3]}).

We conclude this section with the following result, which will be
used later on in the paper:

\begin{proposition}\label{p} Let $({\mathcal L},\<\cdot,\cdot\>_{L})$ be a standard stationary
spacetime as in (\ref{metrica}) and
$(S,\langle\cdot,\cdot\rangle)$ a complete Riemannian manifold. Given two points
$p=(x_p,t_p)$, $q=(x_q,t_q)\in\elle$ satisfying $\Delta_t=
t_q-t_p\geq 0$, the following assertions hold:
\begin{itemize}
\item[{\sl (i)}] $J^-(q)\cap (S\times \{t_p\})$ is closed in
$S\times \{t_p\}$;
\item[{\sl (ii)}] if $J^-(q)\cap (S\times
\{t_p\})$ is compact in $S\times \{t_p\}$, then there exists
$\varepsilon>0$ such that, setting
$q_{\varepsilon}=(x_q,t_q+\varepsilon)$,
$J^-(q_{\varepsilon})\cap
(S\times \{t_p\})$  is
also compact in $S\times \{t_p\}$.
\end{itemize}
\end{proposition}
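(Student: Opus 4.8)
The plan is to translate the causal relation into a reachability problem on $S$ and then to run a limit--curve argument powered by the completeness of $(S,\<\cdot,\cdot\>)$. Since on a standard stationary spacetime $t$ is a temporal function, a future--directed causal curve $\gamma$ may be reparametrized by $t$ as $\gamma(t)=(x(t),t)$, and then $\<\dot\gamma,\dot\gamma\>_L\le 0$ reads
\[
|\dot x|^2+2\<\delta(x),\dot x\>-\beta(x)\le 0,
\]
a constraint convex in $\dot x$ that forces the Riemannian speed bound $|\dot x|\le g(x):=|\delta(x)|+\sqrt{|\delta(x)|^2+\beta(x)}$, with $g$ continuous and positive on $S$. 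Reversing the parameter, $(x,t_p)\in J^-(q)$ if and only if there is an absolutely continuous $\eta\colon[0,\Delta_t]\to S$ with $\eta(0)=x_q$, $\eta(\Delta_t)=x$ satisfying the analogous constraint; equivalently, the Fermat--Randers distance $d_F$ of the metric satisfies $d_F(x,x_q)\le\Delta_t$ (a minimizing path existing when $d_F(x,x_q)=\Delta_t$). I will also use that the vertical segments $\{x\}\times[a,b]$ are future timelike — so $J^-(q)$ is closed under decreasing $t$, whence $J^-(q)\cap(S\times\{t_p\})\subseteq J^-(q_\varepsilon)\cap(S\times\{t_p\})$ — and that $(x_q,t_p)\le q$, so $(x_q,t_p)$ belongs to every such slice.

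For \emph{(i)}, take $x_n\to x_\infty$ in $S$ with $(x_n,t_p)\in J^-(q)$ and connecting curves $\eta_n\colon[0,\Delta_t]\to S$ as above. The crux is to confine the $\eta_n$ to a fixed compact subset of $S$. With $\rho_n(s)=d(x_q,\eta_n(s))$ one has $\dot\rho_n\le g(\eta_n)\le\hat g(\rho_n)$ a.e., where $\hat g(r):=\max_{\overline{B}(x_q,r)}g<\infty$ (here completeness enters, via compactness of the ball by Hopf--Rinow), so comparison with $\dot z=\hat g(z)$, $z(0)=0$, gives $\rho_n(s)\le z(\Delta_t)$ and, when $z$ stays finite up to time $\Delta_t$, all the $\eta_n$ lie in $\overline{B}(x_q,z(\Delta_t))$; in general one should use instead the sharper bound $d_F(\eta_n(s),x_q)\le s$ and continuity of $d_F(\cdot,x_q)$, the real task being to show the relevant backward Fermat ball $\{x:d_F(x,x_q)\le\Delta_t\}$ is relatively compact. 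Once the $\eta_n$ lie in a common compact set, the causal constraint yields a uniform Lipschitz bound, Arzel\`a--Ascoli gives a subsequence converging uniformly to $\eta_\infty$ with $\eta_\infty(0)=x_q$, $\eta_\infty(\Delta_t)=x_\infty$, a further subsequence has $\dot\eta_n\rightharpoonup\dot\eta_\infty$ weakly in $L^2$, and, the causal constraint being convex in $\dot\eta$, weak lower semicontinuity lets it pass to $\eta_\infty$; hence $(x_\infty,t_p)\le q$.

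For \emph{(ii)}, suppose $J^-(q)\cap(S\times\{t_p\})$ is compact; since it contains $(x_q,t_p)$ it lies in $\overline{B}(x_q,M)\times\{t_p\}$ for some $M<\infty$. Let $\eta\colon[0,\Delta_t+\varepsilon]\to S$ be a reversed, $t$--parametrized causal curve with $\eta(0)=x_q$, with $\varepsilon>0$ to be chosen. For $s\le\Delta_t$, lifting $\eta|_{[0,s]}$ to a past--directed causal curve from $(x_q,t_p+s)$ to $(\eta(s),t_p)$ and appending the vertical future timelike segment from $(x_q,t_p+s)$ to $q$ shows $(\eta(s),t_p)\le q$, hence $\eta(s)\in\overline{B}(x_q,M)$. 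For $s\in[\Delta_t,\Delta_t+\varepsilon]$, comparison with $\dot z=\hat g(z)$, $z(\Delta_t)=M$, keeps $\eta(s)\in\overline{B}(x_q,M')$ with $M'<\infty$ as soon as $\varepsilon$ is so small that $z$ does not blow up on $[\Delta_t,\Delta_t+\varepsilon]$ — possible because $\hat g(M)<\infty$. Thus $J^-(q_\varepsilon)\cap(S\times\{t_p\})$ is bounded, hence relatively compact by completeness, and compact because it is closed by \emph{(i)}.

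The step I expect to be the main obstacle is the confinement, in \emph{(i)}, of the connecting causal curves to a compact subset of $S$: a causal curve can a priori escape to spatial infinity in finite coordinate time $t$ even though $(S,\<\cdot,\cdot\>)$ is complete — this is exactly why the slices $S\times\{t\}$ need not be Cauchy hypersurfaces here — so the crude length bound may fail, and one must argue more carefully, blending the completeness of $S$ with the fine structure of the Fermat--Randers metric; the rest is the standard limit--curve machinery.
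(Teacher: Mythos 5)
Your reduction of causality to the Fermat--Randers metric on $S$ and your treatment of part \emph{(ii)} are sound in outline (and close in spirit to the paper's, which also reduces \emph{(ii)} to a boundedness claim settled by Hopf--Rinow, though via a contradiction argument estimating the Fermat length of causal curves that would have to travel a unit Riemannian distance while the coordinate time advances only by $\varepsilon_n\to 0$). The problem is part \emph{(i)}, and it is exactly the obstacle you flag yourself: your argument cannot start without confining the connecting curves $\eta_n$ to a fixed compact subset of $S$, and the fix you propose --- showing that the backward Fermat ball $\{x: d_F(x,x_q)\le\Delta_t\}$ is relatively compact --- is \emph{false} in general. That set is essentially $J^-(q)\cap(S\times\{t_p\})$ itself, whose compactness is precisely the extra hypothesis of part \emph{(ii)} and of Theorem \ref{nuovo}; if it were automatic, those hypotheses would be vacuous. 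Concretely, if $\beta$ grows fast enough at spatial infinity the lightcones open up and causal curves reach arbitrarily far in coordinate time $\Delta_t$, so the slice of the causal past is unbounded even though $(S,\<\cdot,\cdot\>)$ is complete. Closedness must therefore be proved with no compactness available, and your Arzel\`a--Ascoli / weak-$L^2$ limit machinery has no way to produce the limiting connecting curve in that situation.

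The paper avoids the issue by a different mechanism: each $y_k\in J^-(q)\cap(S\times\{t_p\})$ is joined to $q$ by a causal curve which is then prolonged to a \emph{past inextendible} causal curve by flowing along $-\partial_t$, and the limit curve theorem \cite[Proposition 3.31]{bee} applied to this sequence yields a past inextendible causal limit curve departing from $q$ and passing through $y=\lim_k y_k$, whence $y\in J^-(q)$. No a priori confinement of the sequence is needed there. If you want to keep the Fermat-metric formulation, you would have to prove the corresponding attainment statement on the sphere $\{x:d_F(x,x_q)=\Delta_t\}$ by some such limit-curve (or Fermat-geodesic) argument rather than by compactness of the ball; as written, part \emph{(i)} is not established, and part \emph{(ii)} inherits the gap since it invokes \emph{(i)} to pass from boundedness to compactness.
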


\begin{proof} {\sl (i)}
Arguing by contradiction, assume that $J^-(q)\cap (S\times \{t_p\})$ is not closed in $S\times
\{t_p\}$. Then, there exists a sequence $(y_k)_k \subset
J^-(q)\cap (S\times \{t_p\})$ converging to some point $y \in
S\times \{t_p\}$, but \be\label{uff1} y \not\in J^-(q). \ee By
assumption, for each $k \in \N$ there exists a past
inextendible\footnote{The past inextendible causal curves
$\gamma_k$ can be obtained by prolonging the corresponding causal
curves from $q$ to $y_k$ (ensured by condition $y_k\in J^-(q)$)
with integral lines of the timelike vector field $-\partial_t$.}
causal curve $\gamma_k$ departing from $q$ and passing through
$y_k$. Then, \cite[Proposition 3.31]{bee} ensures that, up to a
subsequence, $(\gamma_k)_k$ converges to a past inextendible
causal curve $\gamma$ departing from $q$ and passing
through $y$. Therefore, $y\in J^-(q)$, in contradiction with \eqref{uff1}.\\
{\sl (ii)} By contradiction, assume the existence of a sequence of
points $(q_n)_n$, with $q_n=(x_q,t_q+\varepsilon_n)\in\elle$ and
$\varepsilon_n\searrow 0$, such that for all $n \in \N$ the set
$J^-(q_{n})\cap (S\times \{t_p\})$ is not compact in $S\times
\{t_p\}$. By the Hopf--Rinow theorem, since
$(S,\langle\cdot,\cdot\rangle)$ is complete and $J^-(q_{n})\cap
(S\times \{t_p\})$ is closed (property {\sl (i)}), it cannot be
bounded. So, for every $n \in \N$ there exists an unbounded
sequence of points $(p^n_k)_k\subset J^-(q_{n})\cap (S\times
\{t_p\})$, with $p^n_k=(x^n_k,t_p)$. By using a Cantor's diagonal
type argument, we construct an unbounded sequence $(p_n)_n$, with
$p_{n}=p^n_{k_n}$, such that $p_n\in J^-(q_{n})\cap (S\times
\{t_p\})$ for all $n$. Denote by $\gamma_n=(x_n,t_n)$ a
future--directed causal curve joining $p_n$ to $q_n$, and let
$s_n\in I$ be such that $t_n(s_n)=t_p+\varepsilon_n$ for each $n
\in\N$. Since the future--directed causal curve
$\alpha_n=(x_n,t_n-\varepsilon_n)$ on $[s_n,1]$ joins
$z_n=(x_{n}(s_n),t_p)$ to $q$, we have that $(z_n)_n$ is
contained in the compact set $J^-(q)\cap (S\times \{t_p\})$.
Thus, since $(p_n)_n$ is unbounded in $S\times \{t_p\}$, there exists
$\overline{s}_n\in [0,s_n]$ such that
\begin{equation}\label{eeq}
x_n\mid_{[\overline{s}_n,s_n]}\;\;\hbox{remains
bounded}\quad\hbox{and}\quad {\rm
length}(x_n\mid_{[\overline{s}_n,s_n]})\geq 1\;\; \forall n\in\N.
\end{equation}
On the other hand, as $\gamma_n=(x_n,t_n)$ is causal and
future--directed, $t_n$ is characterized by
$\<\dot\gamma_n,\dot\gamma_n\>_L\leq 0$ and $\dot t_n>0$ on $I$
(recall \eqref{metrica}), hence it follows that
\[
\dot{t}_n\geq
\frac{\langle\delta(x_n),\dot{x}_n\rangle}{\beta(x_n)} +
\sqrt{\frac{\langle\delta(x_n),\dot{x}_n\rangle^2}{\beta(x_n)^2} +
\frac{\langle\dot{x}_n,\dot{x}_n\rangle}{\beta(x_n)}}\quad\hbox{on
$I$.}
\]
By integrating the previous inequality in $[\bar{s}_n,s_n]$ , we deduce
\[
\int_{\overline{s}_n}^{s_n}\frac{\langle\delta(x_n),\dot{x}_n\rangle}{\beta(x_n)}\, \de s
+
\int_{\overline{s}_n}^{s_n}\sqrt{\frac{\langle\delta(x_n),\dot{x}_n\rangle^2}{\beta(x_n)^2}
+ \frac{\langle\dot{x}_n,\dot{x}_n\rangle}{\beta(x_n)}}\, \de s\leq
\int_{\overline{s}_n}^{s_n}\dot{t}_n\, \de s\leq\varepsilon_n\rightarrow
0,
\]
as $n\rightarrow\infty$. However, by virtue of (\ref{eeq}), the
first member of the previous expression remains positive and far from
zero, a contradiction.
\end{proof}


\section{Stationary intrinsic functional framework}\label{ss}

A considerable contribution to the study of the geodesic
connectedness of spacetimes was given in \cite{gm}. In that paper
the authors introduced a variational principle for geodesics,
based on the natural constraint \eqref{vinc}, and proved the
geodesic connectedness of standard stationary spacetimes $\elle$,
under some boundedness assumptions for the metric coefficients
$|\delta|$ and $\beta$ (recall \eqref{metrica}). Under the
hypotheses of Theorem \ref{mainmain}, the spacetime $\mathcal L$
globally splits into (\ref{metrica}), and previous result can be
applied. However, this splitting is neither unique nor canonically
associated to $\elle$, and the conclusion may depend on it. In
order to avoid this arbitrariness, an intrinsic approach to the
problem of geodes\-ic connectedness was developed in \cite{gp}.
There, the variational principle in \cite{gm} is translated into a
splitting independent form, and a compactness assumption on the
infinite dimensional manifold of the paths between two points is
introduced, called {\em pseudo--coercivity} (see from Theorem
\ref{t1} till the end of this section). This condition implies
global hyperbolicity, but, in the practice, it is quite difficult
to verify. Motivated by this deficiency, in \cite{cfs} the authors
worked under intrinsic geometric assumptions, which involve the
causal structure of the spacetime and are shown to be equivalent
to pseudo--coercivity. For a given complete spacelike smooth
Cauchy hypersurface $S$ and a given complete timelike Killing
vector field $K$, Proposition \ref{condelta} is applied to obtain
the corresponding global splitting. But, even if this splitting is
neither unique nor canonically associated to $\elle$, the result
obtained in \cite{cfs} is independent of the chosen $K$ and $S$,
and no growth hypotheses on the coefficients of the metric
$\<\cdot,\cdot\>_{L}$ are involved.

As we will see later on, the proof of Theorem \ref{tm} makes use
of Theorem \ref{nuovo}, a refinement of Theorem \ref{mainmain}.
So, in the rest of this section we are going to recall the
intrinsic variational functional framework associated to a
stationary spacetime, as developed in \cite{cfs, gp}.

Let $(\elle,\langle\cdot,\cdot\rangle_{L})$ be a
stationary spacetime. As shown in \cite{gp},
by taking into account the constraint (\ref{vinc}), the geodesics
in $\elle$ connecting two fixed
points $p,q\in\elle$ correspond to critical points of functional
$f$ in \eqref{fuu} restricted to the set of curves
\[
C^1_{K}(p,q) = \{z\in C^1(I,\elle) :  \exists\, C_z \in \R\
\hbox{such that $\langle\dot z,K(z)\rangle_{L} \equiv C_z$}\}.
\]
Since our approach will require dealing with $H^1$ curves on
$\elle$, we also introduce the infinite dimensional manifold
\[
\begin{split}
\Omega(p,q)\ =\ \big\{z:I\rightarrow \elle :\;\, & z\ \text{ is absolutely continuous,}\\
& z(0) =p,\, z(1) =q,\, \int_0^1\<\dot z, \dot z\>_{R}\, \de s
<+\infty\big\},
\end{split}
\]
where $\langle\cdot,\cdot\rangle_{R}$ is the Riemannian metric
canonically associated to $K$ and $\langle\cdot,\cdot\rangle_{L}$,
i.e.
\[ \<\zeta, \zeta'\>_{R} =\ \<\zeta, \zeta'\>_{L} -\ 2\
\frac{\<\zeta, K(z)\>_{L}\ \< \zeta', K(z)\>_{L}}{ \<K(z) , K(z)
\>_{L}}\qquad \hbox{ for all } z \in \elle,\;\; \zeta, \zeta' \in T_z
\elle.
\]
For each $z\in\Omega(p,q)$ the tangent space $T_z\Omega(p,q)$ is
given by the $H^1$ vector fields $\zeta\colon I\to T\mathcal
\elle$ along $z$ such that $\zeta(0)=0=\zeta(1)$. Moreover, the functional
$f$ in (\ref{fuu}) is well defined and finite on the whole manifold
$\Omega(p,q)$. Standard arguments ensure that $f$ is smooth, with
differential given by
\[
\de f(z)[\zeta] = \int_0^1 \langle\dot z,\nabla_s^L\zeta\rangle_L \de
s \qquad \hbox{ for all }  z\in \Omega(p,q),\ \zeta \in T_z\Omega(p,q),
\]
and its critical points are the geodesics in
$(\elle,\langle\cdot,\cdot\rangle_{L})$ connecting $p$ to $q$.

The set $C^1_K(p,q)$ can be also extended to a subset of
$\Omega(p,q)$ defined as \be\label{omega} \Omega_{K}(p,q) =
\left\{ z \in \Omega (p,q) :\ \exists\, C_z\in \R \ \hbox{such
that}\ \<\dot z, K(z)\>_{L}\equiv C_z\ \text{a.e. on $I$}\right\}
\ee
and definitions and theorems below hold on both of them.

The following result reduces the geodesic connectedness problem
between $p$ and $q$ to the search of critical points of $f$ on
$\Omega_K(p,q)$ (cf. \cite[Theorem 3.3]{gp}):
\begin{theorem} \label{t1}
A curve $\gamma\in\Omega(p,q)$  is a geodesic on $\elle$ connecting $p$ to $q$ if and only if
$\gamma\in\Omega_K(p,q)$ and $\gamma$ is a critical point of $f$ in (\ref{fuu}) restricted to
$\Omega_{K}(p,q)$.
\end{theorem}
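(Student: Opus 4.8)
\emph{Sketch of a proof.} The plan is to exhibit the constraint defining $\Omega_K(p,q)$ as exactly the component of the Euler--Lagrange equations of $f$ associated with the direction of $K$, so that restricting $f$ to $\Omega_K(p,q)$ discards no critical point information. One implication is immediate. If $\gamma\in\Omega(p,q)$ is a geodesic connecting $p$ to $q$, then $\gamma$ is a critical point of $f$ on the whole $\Omega(p,q)$, and the conservation law \eqref{vinc} for the Killing field $K$ gives $\langle\dot\gamma,K(\gamma)\rangle_L\equiv C_\gamma$, so $\gamma\in\Omega_K(p,q)$; since $\de f(\gamma)$ annihilates all of $T_\gamma\Omega(p,q)$, it in particular annihilates $T_\gamma\Omega_K(p,q)$, and $\gamma$ is critical for $f|_{\Omega_K(p,q)}$. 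This step presupposes that $\Omega_K(p,q)$ is a smooth submanifold of $\Omega(p,q)$ near $\gamma$, which I would establish as the first ingredient of the converse.

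For the converse, I would first describe $\Omega_K(p,q)$ concretely. Using Proposition \ref{condelta} (or, if one prefers to argue locally, any stationary splitting), write $\elle=S\times\R$ with $K=\partial_t$, so that for $z=(x,t)$ one computes, from \eqref{metrica}, $\langle\dot z,K(z)\rangle_L=\langle\delta(x),\dot x\rangle-\beta(x)\dot t$. Given a curve $x:I\to S$ with $x(0)=x_p$, $x(1)=x_q$, the condition $(x,t)\in\Omega_K(p,q)$ forces $\dot t=(\langle\delta(x),\dot x\rangle+C)/\beta(x)$ for some constant $C$, and the endpoint condition $t(1)-t(0)=t_q-t_p$ determines $C=C_x$ uniquely, since $\beta>0$ makes $\int_0^1\de s/\beta(x)$ strictly positive; the resulting $t_x$ has finite energy because $x(I)$ is compact. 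Hence $\Omega_K(p,q)$ is the graph $x\mapsto(x,t_x)$ over the Riemannian path space between $x_p$ and $x_q$, in particular a smooth submanifold of $\Omega(p,q)$, and $f|_{\Omega_K(p,q)}$ may be read as the functional $J(x):=f(x,t_x)$. The smoothness and well--posedness of $x\mapsto t_x$ would be checked via an implicit function theorem argument, or by solving the linear ODE above explicitly.

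The decisive point is then a one--line computation. By the chain rule $\de J(x)[\xi]=\partial_x f(x,t_x)[\xi]+\partial_t f(x,t_x)\big[\,\mathrm{D}t_x[\xi]\,\big]$, and the second term vanishes identically: for $\eta$ vanishing at the endpoints one has $\partial_t f(x,t)[\eta]=\int_0^1\big(\langle\delta(x),\dot x\rangle-\beta(x)\dot t\big)\dot\eta\,\de s=-\int_0^1\frac{\de}{\de s}\big(\langle\delta(x),\dot x\rangle-\beta(x)\dot t\big)\,\eta\,\de s$, and along $\Omega_K(p,q)$ the bracket $\langle\delta(x),\dot x\rangle-\beta(x)\dot t_x=-C_x$ is constant --- this is precisely the second equation in \eqref{equa} --- so its $s$--derivative is zero. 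Thus $\de J(x)=\partial_x f(x,t_x)$ while $\partial_t f(x,t_x)\equiv0$ automatically; therefore $x$ is a critical point of $J$ if and only if $(x,t_x)$ is a critical point of $f$ on the full $\Omega(p,q)$, i.e. if and only if $(x,t_x)$ is a geodesic from $p$ to $q$. Unwinding the graph identification, a curve $\gamma\in\Omega_K(p,q)$ is critical for $f|_{\Omega_K(p,q)}$ exactly when it is such a geodesic, which completes the equivalence.

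The step I expect to be the main obstacle is not the variational identity but the infinite--dimensional bookkeeping around it: proving rigorously that $\Omega_K(p,q)$ is a smooth embedded submanifold of $\Omega(p,q)$, that $x\mapsto t_x$ is smooth with the expected differential, and --- since the statement is meant to be intrinsic --- that the graph description, and hence the notion of critical point of $f|_{\Omega_K(p,q)}$, is independent of the auxiliary splitting $\elle=S\times\R$ used to present it. Keeping track of the regularity of the curves (absolute continuity, and finiteness of the energy with respect to $\langle\cdot,\cdot\rangle_R$) and of the possible non--compactness of $S$ throughout these verifications is where genuine care is required.
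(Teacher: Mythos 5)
Your central computation is correct and is the classical Giannoni--Masiello natural--constraint argument: in the splitting \eqref{metrica} one has, for $\eta\in H^1_0(I,\R)$,
$\partial_t f(x,t)[\eta]=\int_0^1\big(\langle\delta(x),\dot x\rangle-\beta(x)\dot t\big)\dot\eta\,\de s=\int_0^1\langle\dot z,K(z)\rangle_L\,\dot\eta\,\de s$,
which vanishes for every such $\eta$ exactly when $\langle\dot z,K(z)\rangle_L$ is (a.e.) constant; so on $\Omega_K(p,q)$ the $t$--component of $\de f$ is satisfied for free and criticality of $J(x)=f(x,t_x)$ is equivalent to criticality of $f$ on all of $\Omega(p,q)$. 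Your graph description of $\Omega_K(p,q)$ over $\Omega(x_p,x_q;S)$, with $t_x$ determined by \eqref{dott} and $C_z$ by \eqref{cz}, is precisely the reduction the paper itself performs in Section \ref{s4} to obtain $\J$ in \eqref{Jn'}, and the regularity bootstrap turning a critical point of $f$ on $\Omega(p,q)$ into a smooth geodesic is standard. The forward implication via \eqref{vinc} is also fine.

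The paper, however, does not prove Theorem \ref{t1} this way: it simply invokes the intrinsic result \cite[Theorem 3.3]{gp}, which is formulated for an \emph{arbitrary} stationary spacetime with no global product structure, and which handles the constraint $\langle\dot z,K(z)\rangle_L\equiv C_z$ directly on path space (showing it is a smooth ``natural constraint'' on which the normal component of $\de f$ vanishes automatically), without ever choosing a splitting. This points to the one genuine gap in your argument as a proof of the theorem \emph{as stated}: Theorem \ref{t1} assumes only that $\elle$ is stationary, whereas Proposition \ref{condelta} --- your source for $\elle=S\times\R$ with $K=\partial_t$ --- requires global hyperbolicity and completeness of $K$. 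A general stationary spacetime need not split globally (the flow of $K$ may be incomplete and its orbit space need not be a manifold), and arguing ``locally'' does not obviously repair this, since $\Omega_K(p,q)$ and the notion of critical point of $f|_{\Omega_K(p,q)}$ are global objects on path space. Your proof is therefore complete only for standard stationary spacetimes; that happens to cover every use the paper makes of the theorem (it is applied to the auxiliary metrics \eqref{stati}), but to prove the statement in its stated generality you would need either the intrinsic machinery of \cite{gp} or an explicit reduction of the general case to the split one. The submanifold/bookkeeping issues you flag at the end are real but are settled in \cite{gp}; the splitting's very existence is the more serious issue to address first.
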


The following definitions are given in \cite{gp}:
\begin{itemize}
\item[{\sl (i)}] given $c\in \R$, the set $\Omega_{K}(p,q)$ is
{\em $c$--precompact} for $f$ if every sequence $(z_m)_m$ in
$\Omega_{K}(p,q)$ such that $f(z_m) \le c$ has a subsequence
which converges weakly in $\Omega_K(p,q)$ (hence, uniformly in
$\elle$);
\item[{\sl (ii)}] the restriction of $f$ to
$\Omega_K(p,q)$ is {\em pseudo--coercive} if $\Omega_K(p,q)$ is
$c$--pre\-com\-pact for all $c \ge \inf f(\Omega_K(p,q))$.
\end{itemize}
Then, the following theorem holds (cf. \cite[Theorem 1.2]{gp}).

\begin{theorem}\label{intrinsictheo}{\bf [Giannoni-Piccione]}
If $\Omega_K(p,q)$ is not empty and there exists $c > \inf
f(\Omega_K(p,q))$ such that $\Omega_{K}(p,q)$ is $c$--precompact, then
there exists at least one geodesic in $({\mathcal
L},\<\cdot,\cdot\>_L)$ joining $p$ to $q$.
\end{theorem}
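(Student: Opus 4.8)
The statement to prove is Theorem~\ref{intrinsictheo}: assuming $\Omega_K(p,q)\neq\emptyset$ and the existence of some $c>\inf f(\Omega_K(p,q))$ for which $\Omega_K(p,q)$ is $c$-precompact, I want to produce a critical point of $f$ restricted to $\Omega_K(p,q)$; by Theorem~\ref{t1} this critical point is then a geodesic joining $p$ to $q$. The natural route is a minimization argument on the sublevel set $f^c=\{z\in\Omega_K(p,q):f(z)\le c\}$, combined with a deformation/Palais--Smale type scheme to handle the fact that $f$ is \emph{not} bounded below on all of $\Omega_K(p,q)$ (the action of an indefinite metric is strongly indefinite). The key insight is that $c$-precompactness is exactly the compactness ingredient that replaces the usual coercivity and Palais--Smale conditions on the relevant sublevel.

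\textbf{Main steps.} First I would record the functional-analytic setup: $\Omega_K(p,q)$ is a smooth submanifold of the Hilbert manifold $\Omega(p,q)$, $f$ is smooth on it with $df(z)[\zeta]=\int_0^1\langle\dot z,\nabla^L_s\zeta\rangle_L\,\de s$, and critical points of $f|_{\Omega_K(p,q)}$ are the connecting geodesics. Second, I would choose a minimizing sequence $(z_m)_m\subset\Omega_K(p,q)$ for $\inf f$ with all $f(z_m)\le c$; $c$-precompactness gives a subsequence converging weakly in $\Omega_K(p,q)$ and uniformly in $\elle$ to some limit $z_\infty\in\Omega_K(p,q)$. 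The crux is lower semicontinuity: because the Riemannian structure $\langle\cdot,\cdot\rangle_R$ controls the $H^1$-norm and $f$ can be written as the $R$-action minus a term governed by the constant $C_z$ along $\Omega_K$, weak convergence should yield $f(z_\infty)\le\liminf f(z_m)=\inf f$, so $z_\infty$ is a minimizer and hence a critical point. If direct lower semicontinuity fails because of the indefinite cross-term, I would instead invoke a deformation argument: using $c$-precompactness one shows $f|_{\Omega_K(p,q)}$ satisfies the Palais--Smale condition at every level below $c$, after which the standard deformation lemma on the complete manifold $\Omega_K(p,q)$ forces a critical point in $f^c$, since otherwise one could deform $f^c$ below its infimum.

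\textbf{The hard part.} The principal obstacle is precisely the indefiniteness of $f$: unlike the Riemannian case, $f$ is neither bounded below nor coercive, so weak limits of minimizing sequences need not exist without an external compactness hypothesis, and even with the weak limit in hand the lower-semicontinuity step is delicate because the quadratic form $\langle\dot z,\dot z\rangle_L$ is not positive. I expect the technical heart to lie in exploiting the constraint $\langle\dot z,K(z)\rangle_L\equiv C_z$ to rewrite $f$ on $\Omega_K(p,q)$ in a form where the $R$-part is weakly lower semicontinuous and the indefinite part converges under the uniform convergence supplied by $c$-precompactness; this is where the machinery of \cite{gp} enters and where the verification that the weak limit genuinely lies in $\Omega_K(p,q)$ (i.e.\ that the constants $C_{z_m}$ stay bounded and pass to the limit) must be carried out carefully. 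Once these two points are secured, the conclusion that $z_\infty$ is a critical point, and therefore by Theorem~\ref{t1} a geodesic connecting $p$ and $q$, follows.
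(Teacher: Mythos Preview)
The paper does not give its own proof of Theorem~\ref{intrinsictheo}: the result is quoted verbatim from Giannoni--Piccione \cite[Theorem~1.2]{gp} and used as a black box (see the sentence preceding the theorem and Remark~\ref{noempty}). So there is no proof in the paper to compare your proposal against.

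That said, your outline is the standard route and is essentially what \cite{gp} does: restrict $f$ to $\Omega_K(p,q)$, take a minimizing sequence in the sublevel $f^c$, extract a weak limit via $c$--precompactness, and show that the limit realizes the infimum (hence is a critical point, hence a geodesic by Theorem~\ref{t1}). Two small remarks. First, your concern that ``the weak limit genuinely lies in $\Omega_K(p,q)$'' is already built into the definition of $c$--precompactness given in the paper (the subsequence is required to converge weakly \emph{in} $\Omega_K(p,q)$), so this is not an additional obstacle. Second, the lower-semicontinuity step is indeed where the work lies: on $\Omega_K(p,q)$ one rewrites $f$ using the constraint $\langle\dot z,K(z)\rangle_L\equiv C_z$ so that the indefinite part becomes a lower-order term controlled by uniform convergence, while the remaining quadratic part is governed by the positive Riemannian metric $\langle\cdot,\cdot\rangle_R$ and is weakly lower semicontinuous. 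Your plan identifies this correctly; carrying it out is precisely the content of \cite{gp}.
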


\begin{remark} \label{noempty}
{\em In the hypotheses of Theorem \ref{mainmain},
the completeness of $K$ guarantees that $\Omega_K(p,q) \ne
\emptyset$ for any $p$, $q \in \elle$ (cf. \cite[Lemma 5.7]{gp}
and \cite[Proposition 3.6]{cfs}); moreover, the
technical condition of pseudo--coercivity holds (cf. \cite[Theorem
5.1]{cfs}). Therefore, Theorem \ref{mainmain} follows from Theorem
\ref{intrinsictheo}. }
\end{remark}

\section{The stationary non-canonical global splitting}\label{s4}

Let $({\mathcal L},\<\cdot,\cdot\>_L)$ be a standard stationary
spacetime as in (\ref{metrica}) with $\beta(x) > 0$ for all $x \in S$. Given two points $p=(x_p,t_p),
q=(x_q,t_q)\in\elle$, the space $\Omega(p,q)$ can be re-written as
\[
\Omega(p,q)\ =\ \Omega (x_p,x_q;S)\times W(t_p,t_q),
\]
where
\[
\begin{split}
\Omega (x_p,x_q;S)\ =\ \big\{x:I\rightarrow S:\, &\, x \text{ is absolutely continuous,}\\
&\, x(0) =x_p,\, x(1) =x_q,\, \int_0^1\<\dot x, \dot x\>\, \de s <+\infty\big\},
\end{split}
\]
\[
W(t_p,t_q)\ =\ \big\{t\in H^1(I,\R):\, t(0) =t_p,\, t(1) =t_q\big\}
= H^1_0(I,\R) + T^\ast,
\]
being $H^1(I,\R)$ the classical Sobolev space,
\[
H^1_0(I,\R)= \big\{t\in H^1(I,\R):\, t(0)=0=t(1)\big\}
\]
and \be\label{tempo} T^\ast: s\in I\longmapsto t_p +
s\Delta_t\in\R, \qquad \Delta_t= t_q-t_p. \ee For every
$x\in\Omega (x_p,x_q;S)$ it results
\[
\begin{split}
T_x\Omega (x_p,x_q;S)\ =\ \big\{\xi :I\rightarrow T_xS: \, &\, \xi\ \text{ is absolutely continuous,}\\
&\, \xi(0) = 0 =\xi(1), \, \int_0^1\<D_s\xi, D_s\xi\>\, \de s
<+\infty\big\}.
\end{split}
\]
Furthermore, $W(t_p,t_q)$ is a closed affine submanifold of
$H^1(I,\R)$ having tangent space
\[
T_t  W(t_p,t_q) = H^1_0(I,\R)\qquad \hbox{for all $t\in
W(t_p,t_q)$.}
\]
So, for every $z=(x,t)\in \Omega(p,q)$ it is
\[
T_z\Omega(p,q)= T_x\Omega (x_p,x_q;S) \times T_t  W(t_p,t_q) =
T_x\Omega (x_p,x_q;S) \times H^1_0(I,\R)
\]
and $\Omega(p,q)$ can be equipped with the Riemannian structure
\[
\langle\zeta,\zeta\rangle_H\ =\
\langle(\xi,\tau),(\xi,\tau)\rangle_H\ =\
\int_0^1\langle D_s\xi,D_s\xi\rangle\, \de s + \int_0^1\dot\tau^2\, \de s,
\]
for all $z=(x,t)\in\Omega(p,q)$ and $\zeta=(\xi,\tau)\in
T_z\Omega(p,q)$.

Next, assume that $(S,\<\cdot,\cdot\>)$ is complete. Then,
$\Omega(x_p,x_q;S)$ is a complete infinite dimensional manifold
(cf. \cite{k}). By Nash Embedding Theorem the complete mani\-fold
$S$ can be seen as a closed submanifold of an Euclidean space
$\R^N$ (cf. \cite{muller} for the existence of a closed isometric
embedding). Hence, $\Omega(x_p,x_q;S)$ is an embedded submanifold
of the classical Sobolev space $H^1(I,\R^N)$. As usual, let us set
\[
\|y\|^2 = \|y\|_2^2 + \|\dot y\|_2^2 \quad \hbox{for all } y\in H^1 (I, \R^N),
\]
where $\|\cdot\|_2$ denotes the standard $L^2$--norm. It is well
known that the following inequalities hold: \be\label{pi} \|y\|_2\
\leq\ \|y\|_\infty\ \le\ \|\dot y\|_2 \quad \hbox{for all $y \in
H_0^1 (I, \R^N)$,} \ee where $\|\cdot\|_\infty$ denotes the norm
of the uniform convergence (cf., e.g., \cite[Proposition 8.13]{br}). Moreover, the Ascoli--Arzel\'a Theorem
implies that any bounded sequence in $H^1 (I, \R^N)$ has a
uniformly converging subsequence in $C(I,\R^N)$.

For any absolutely continuous curve $z=(x,t):I\rightarrow
\elle$, one has
 \be\label{dopo}
 \<\dot z,K(z)\>_L = \<\dot z,\partial_t\>_L = \<\delta(x),\dot x\> - \beta(x)\dot
 t,\qquad\hbox{(recall that $K=\partial_t$).}
 \ee
Taking into account \eqref{dopo}, if $z\in\Omega_{K}(p,q)$ (recall
\eqref{omega}) then there exists a constant $C_z$ such that
\begin{equation}\label{dott}
\dot t =\ \frac{\langle\delta(x),\dot x\rangle -
C_z}{\beta(x)}\qquad\hbox{a.e. on $I$.}
\end{equation}
Thus, integrating both hand sides of (\ref{dott}) on $I$, and
isolating $C_z$, we get
\begin{equation} \label{cz}
C_z =\ \left(\int_0^1\frac{\langle\delta(x),\dot
x\rangle}{\beta(x)}\ \de s\ -\ \Delta_t\right)\
\left(\int_0^1\frac{\de s}{\beta(x)}\right)^{-1}.
\end{equation}
Denoting by $\J$ the restriction to $\Omega_{K}(p,q)$ of the
functional $f$ in (\ref{fuu}) with metric \eqref{metrica}, and
substituting (\ref{cz}) in (\ref{dott}), $\J$ can expressed as a
functional depending only on $\Delta_t$ (cf. \eqref{tempo}) and
the component $x$ of the curve $z=(x,t)\in\Omega_{K}(p,q)$:
\be\label{Jn'}
\begin{split}
\J(x)\ =\ &\frac 12\|\dot{x}\|_2^2 \\
&+\frac
12\left[\int_0^1\frac{\<\delta(x),\dot x\>^2}{\beta(x)}\de s \,
-\left(\int_0^1 \frac{\<\delta(x),\dot x\>}{\beta(x)}\de s\right)^2
\left(\int_0^1\frac{1}{\beta (x)}\de s\right)^{-1}\right]\\
&-\ \frac{\Delta_t}{2}
\left(\Delta_t - 2 \int_0^1\frac{\<\delta(x),\dot x\>}{\beta(x)}\de s \right)\
\left(\int_0^1\frac{1}{\beta(x)}\de s\right)^{-1}.
\end{split}
\ee
By construction, $f(z) = \J(x)$ if $z = (x,t) \in
\Omega_{K}(p,q)$; furthermore, by applying the Cauchy--Schwarz
inequality to the middle term of \eqref{Jn'}, we get
\be\label{Jn1}
2 \J(x)\ \ge\
\|\dot{x}\|_2^2\ -\ \Delta_t \left(\Delta_t - 2
\int_0^1\frac{\<\delta(x),\dot x\>}{\beta(x)}\de s \right)\
\left(\int_0^1\frac{1}{\beta(x)}\de s\right)^{-1}.
\ee

Now, we are ready to establish an adapted version of Theorem
\ref{mainmain}, needed in Section \ref{ausi}. But, first, we
recall the following result (cf. \cite[Lemma 5.4]{cfs}):
\begin{lemma}\label{l} Fixed any $x\in\Omega(x_p,x_q;S)\cap C^1(I,S)$ ($x$ non--constant if $x_p=x_q$)
there exists a unique future directed lightlike curve $\gamma^l
=(x^l,t^l): [0,1] \to \elle$ joining $(x_p,t_p)$ to $\{x_q\}\times
\R$ in a time $ T(x)= t^l(1)-t^l(0)>0$ such that $x^l=x$.
Moreover, $T(x)$ satisfies:
\begin{equation}
\label{time} T(x) = \int_0^1\frac{\langle\delta(x),\dot
x\rangle}{\beta(x)}\ \de s\ +
\int_0^1\frac{\sqrt{\langle\delta(x),\dot x\rangle^2 + \langle\dot
x,\dot x\rangle\beta(x)}}{\beta(x)}\ \de s.
\end{equation}
\end{lemma}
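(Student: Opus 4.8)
The plan is to look for $\gamma^l$ in the form $\gamma^l(s)=(x(s),t^l(s))$, with the spatial part prescribed to be the given curve $x$, and to determine the real function $t^l$ from the lightlike condition together with the future orientation. First I would impose $\<\dot\gamma^l,\dot\gamma^l\>_L=0$ and expand it through the metric \eqref{metrica} (recalling $\beta>0$ for a standard stationary spacetime). Writing $\dot\gamma^l=(\dot x,\dot t^l)$ this is the pointwise quadratic equation
\[
\beta(x)\,(\dot t^l)^2-2\,\<\delta(x),\dot x\>\,\dot t^l-\<\dot x,\dot x\>=0,\qquad s\in I,
\]
in the unknown $\dot t^l(s)$. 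Its discriminant equals $\<\delta(x),\dot x\>^2+\beta(x)\<\dot x,\dot x\>$, which is non-negative because $\beta>0$ and $\<\dot x,\dot x\>\ge 0$; hence there are exactly two real roots. Requiring $\dot t^l\ge 0$ — i.e. that $\gamma^l$ run towards the future, equivalently $\<\dot\gamma^l,\partial_t\>_L\le 0$, as in the proof of Proposition~\ref{p} — singles out the root carrying the $+$ sign, so that necessarily
\[
\dot t^l(s)=\frac{\<\delta(x),\dot x\>+\sqrt{\<\delta(x),\dot x\>^2+\beta(x)\<\dot x,\dot x\>}}{\beta(x)}\qquad\text{along }x.
\]
This already yields uniqueness: $\dot t^l$ is forced pointwise, and then $t^l$ is forced by the initial condition $t^l(0)=t_p$.

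Next I would verify that this prescription indeed defines an admissible curve. The right-hand side above is a continuous function of $s$, since $x\in C^1(I,S)$, the functions $\delta$ and $\beta$ are differentiable on $S$, the radicand is continuous and non-negative, and $\beta(x(\cdot))$ is continuous and strictly positive along $x$. Hence $t^l(s):=t_p+\int_0^s\dot t^l(\sigma)\,\de\sigma$ defines $t^l\in C^1(I,\R)$, so that $\gamma^l=(x,t^l)\in C^1(I,\elle)$ with $\gamma^l(0)=(x_p,t_p)$, $\gamma^l(1)\in\{x_q\}\times\R$, and $\<\dot\gamma^l,\dot\gamma^l\>_L\equiv 0$ by construction. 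Moreover $\dot t^l\ge 0$ on $I$, with $\dot t^l(s)>0$ at every $s$ where $\dot x(s)\ne 0$ (there the radicand strictly exceeds $\<\delta(x),\dot x\>^2$); since $x$ is non-constant — either because $x_p\ne x_q$, or by hypothesis when $x_p=x_q$ — such $s$ form a non-empty open subset of $I$, whence $\gamma^l$ is a genuine future directed lightlike curve and
\[
T(x)=t^l(1)-t^l(0)=\int_0^1\dot t^l(s)\,\de s>0.
\]
Splitting this last integral according to the two summands in the formula for $\dot t^l$ gives precisely \eqref{time}.

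The argument is an elementary pointwise computation, so I do not expect a serious obstacle. The two points that deserve a little care are the identification of the root compatible with the time orientation — handled through the sign of $\dot t^l$, as recalled above — and the mild regularity bookkeeping: the square root is merely Lipschitz at the origin, but continuity is all one needs for $t^l$ to be $C^1$, and at the (possibly non-empty) set where $\dot x$ vanishes the formula returns $\dot t^l=0$, contributing nothing to $T(x)$ and not affecting any of the conclusions.
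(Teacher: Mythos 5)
Your argument is correct, and it is the standard (indeed, essentially the only) way to prove this statement: the paper itself gives no proof of Lemma~\ref{l}, recalling it directly from \cite[Lemma 5.4]{cfs}, and the argument there is the same pointwise computation you carry out. Imposing $\<\dot\gamma^l,\dot\gamma^l\>_L=0$ with the metric \eqref{metrica} yields the quadratic $\beta(x)(\dot t^l)^2-2\<\delta(x),\dot x\>\dot t^l-\<\dot x,\dot x\>=0$, whose discriminant is non--negative because $\beta>0$ and $\<\cdot,\cdot\>$ is Riemannian; the future--directed choice (characterized, as in the proof of Proposition~\ref{p}, by $\dot t\ge 0$, equivalently $\<\dot\gamma^l,\partial_t\>_L\le 0$) forces the root with the $+$ sign, which gives uniqueness, the $C^1$ regularity of $t^l$ by integration, the strict positivity of $T(x)$ from the non--constancy of $x$, and formula \eqref{time}.

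The one point worth being explicit about --- which you do flag --- is the behaviour at parameters where $\dot x(s)=0$: there your formula gives $\dot t^l(s)=0$, so $\dot\gamma^l(s)$ is the zero vector and the curve is, strictly speaking, not lightlike at that instant. This degeneracy is inherent in the statement as recalled from \cite{cfs} (which places no immersion hypothesis on $x$) and is harmless for the way the lemma is used in Claim~2 of the proof of Theorem~\ref{nuovo}, where only the causal character of the prolonged curves and the value of $T(x)$ matter; so it does not constitute a gap in your proof.
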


\begin{theorem}\label{nuovo}
Let $({\mathcal L},\<\cdot,\cdot\>_{L})$ be a standard stationary
spacetime as in (\ref{metrica}) and
$(S,\langle\cdot,\cdot\rangle)$ a complete Riemannian manifold. If two points
$p=(x_p,t_p)$, $q=(x_q,t_q)\in\elle$ satisfy
\[
\Delta_t= t_q-t_p\geq 0 \quad\hbox{and}\quad J^-(q)\cap (S\times
\{t_p\})\; \hbox{is compact,}
\]
then they are connected by a geodesic in ${\mathcal L}$.
\end{theorem}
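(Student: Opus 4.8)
The plan is to apply the intrinsic variational machinery of Section~\ref{ss}, so the goal is to verify the hypotheses of Theorem~\ref{intrinsictheo} for the points $p=(x_p,t_p)$ and $q=(x_q,t_q)$. Since $K=\partial_t$ is complete in a standard stationary spacetime, $\Omega_K(p,q)\ne\emptyset$ (as recalled in Remark~\ref{noempty}), so the only real work is to find some $c>\inf f(\Omega_K(p,q))$ for which $\Omega_K(p,q)$ is $c$--precompact. Using the splitting $\Omega(p,q)=\Omega(x_p,x_q;S)\times W(t_p,t_q)$, the functional $f$ restricted to $\Omega_K(p,q)$ reduces to the functional $\J(x)$ of \eqref{Jn'}, so precompactness of sublevels of $f$ on $\Omega_K(p,q)$ is equivalent to the following: every sequence $(x_m)_m\subset\Omega(x_p,x_q;S)$ with $\J(x_m)\le c$ admits a subsequence converging weakly in $H^1(I,\R^N)$ (equivalently, uniformly). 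Since $S$ is closed in $\R^N$ and bounded $H^1$--sequences have uniformly convergent subsequences by Ascoli--Arzel\`a, it suffices to produce a uniform $H^1$--bound on $(x_m)_m$.

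First I would obtain an a priori bound on the ``arrival times'' $T(x_m)$ of Lemma~\ref{l}. The key point is the causal hypothesis: the future--directed lightlike curve $\gamma^l_m=(x_m,t^l_m)$ of Lemma~\ref{l} joins $(x_p,t_p)$ to $(x_q,t_p+T(x_m))$; if $T(x_m)$ were unbounded, say $T(x_m)\ge\Delta_t$ for large $m$ along a subsequence, then $(x_q,t_p+\Delta_t)=q$ would lie in $J^-$ of a point with larger $t$--coordinate, and by reversing/truncating $\gamma^l_m$ one produces an unbounded sequence of points of $S\times\{t_p\}$ lying in $J^-(q)$, contradicting the assumed compactness of $J^-(q)\cap(S\times\{t_p\})$ (this is the same mechanism as in the proof of Proposition~\ref{p}). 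More precisely one shows $x_m(s)\in J^-(q)\cap(S\times\{t_p\})$ — or a small enlargement thereof, using Proposition~\ref{p}{\sl(ii)} to pass from $q$ to $q_\varepsilon$ if a strict inequality is needed — so that the curves $x_m$ stay in a fixed compact subset of $S$ and, via \eqref{time} compared with the length, $T(x_m)$ is bounded; hence the bracketed ``stationary'' term in \eqref{Jn'} is controlled.

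Granting the time bound, I would then turn to the energy estimate. From \eqref{Jn1},
\be\label{propen}
\|\dot x_m\|_2^2\ \le\ 2\J(x_m)\ +\ \Delta_t\left(\Delta_t - 2\int_0^1\frac{\<\delta(x_m),\dot x_m\>}{\beta(x_m)}\,\de s\right)\left(\int_0^1\frac{1}{\beta(x_m)}\,\de s\right)^{-1},
\ee
and the cross term is handled by Young's inequality together with the uniform two--sided bounds on $\beta$ and $|\delta|$ available on the compact set containing the images of the $x_m$: the term $\int_0^1\<\delta(x_m),\dot x_m\>/\beta(x_m)\,\de s$ is absorbed into $\tfrac12\|\dot x_m\|_2^2$ with an additive constant, so \eqref{propen} yields $\|\dot x_m\|_2\le C$. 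Since the $x_m$ also stay in a bounded region of $\R^N$, the sequence is bounded in $H^1(I,\R^N)$; extracting a uniformly convergent (hence weakly $H^1$--convergent) subsequence with limit in $\Omega(x_p,x_q;S)$ gives $c$--precompactness for any finite $c$, and in particular for some $c>\inf f$. Theorem~\ref{intrinsictheo} then produces a geodesic joining $p$ to $q$.

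\textbf{Main obstacle.} The delicate step is the first one: converting the compactness of $J^-(q)\cap(S\times\{t_p\})$ into a genuine a priori bound that simultaneously confines the images $x_m(I)$ to a fixed compact set \emph{and} bounds $T(x_m)$, for curves that are only known to have controlled action rather than controlled length. One must argue that a sublevel of $\J$ forces the associated lightlike companion curves $\gamma^l_m$ to remain in the causal past of $q$ (possibly after the $\varepsilon$--enlargement of Proposition~\ref{p}{\sl(ii)}), and that escape to infinity in $S$ is incompatible with a bounded arrival time — essentially the quantitative contradiction \eqref{eeq} in the proof of Proposition~\ref{p}, now run on the minimizing sequence rather than on a single family of causal curves. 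Once this confinement is in place, the remaining energy estimate via \eqref{Jn1} and Ascoli--Arzel\`a is routine.
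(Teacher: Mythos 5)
Your overall framework is the paper's: invoke Theorem \ref{intrinsictheo}, reduce $c$--precompactness to an a priori bound on $\|\dot x_m\|_2$ along a sublevel of $\J$, and use the compactness of $J^-(q)\cap(S\times\{t_p\})$ (enlarged to $q_\varepsilon$ via Proposition \ref{p} {\sl(ii)}) through the lightlike companion curves of Lemma \ref{l}. The energy estimate you describe for curves confined to a compact subset of $S$ is also essentially the paper's Claim 1. However, there is a genuine gap exactly where you flag the ``main obstacle'', and your sketch of how to close it does not work. First, a direction error: if $T(x_m)\ge\Delta_t$ the lightlike curve from $p$ \emph{overshoots} $q$, so its points need not lie in $J^-(q)$; the correct causal implication is the converse one, namely that $T(x_m)\le\Delta_t+\varepsilon$ forces $(x_m(s),t_p)\in J^-(q_\varepsilon)\cap(S\times\{t_p\})$, hence confinement. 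Compactness therefore only yields the alternative: either $x_m$ stays in a fixed compact set, or $T(x_m)>\Delta_t+\varepsilon$ for infinitely many $m$. Your plan presupposes that a bound on $\J$ by itself rules out the second alternative (``a sublevel of $\J$ forces the lightlike companion curves to remain in the causal past of $q$''), but this is precisely what must be proved, and the mechanism you point to --- the inequality \eqref{eeq} from Proposition \ref{p} --- is unavailable here: that argument applies to causal curves whose $t$--increment tends to zero, whereas your $x_m$ are only action--controlled and their companion curves have a priori unbounded arrival times.

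The missing ingredient is the paper's Claim 3: introducing the Cauchy--Schwarz majorant $\tilde T_m\ge T_m$ of \eqref{time}, one has the exact algebraic identity
\[
2\J(x_m)\;=\;(\tilde T_m-\Delta_t)\left(\tilde T_m+\Delta_t-2\int_0^1\frac{\<\delta(x_m),\dot x_m\>}{\beta(x_m)}\,\de s\right)\left(\int_0^1\frac{\de s}{\beta(x_m)}\right)^{-1},
\]
which, combined with $T_m>\Delta_t+\varepsilon$ and a case split on whether $\bigl(\Delta_t-2\int_0^1\frac{\<\delta(x_m),\dot x_m\>}{\beta(x_m)}\de s\bigr)\bigl(\int_0^1\frac{\de s}{\beta(x_m)}\bigr)^{-1}$ stays bounded or diverges, shows that $\|\dot x_m\|_2\to\infty$ forces $\J(x_m)\to+\infty$, the desired contradiction. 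Without this identity, the dangerous case where $\int_0^1\frac{\<\delta(x_m),\dot x_m\>}{\beta(x_m)}\de s\to-\infty$ (so the last term of \eqref{Jn'} tends to $-\infty$ and could cancel the kinetic term) is not excluded, and your Young--inequality absorption in \eqref{propen} cannot even get started, since it already assumes the two--sided bounds on $\beta$ and $|\delta|$ that only hold after confinement is established.
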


\begin{proof}\footnote{Even if the core of this proof is essentially contained in \cite[Section
5]{cfs}, here we rearrange it for reader's convenience. Although
the functional $f$ is defined in $C^1_K(p,q)$, it is natural to
consider limits in $\Omega_{K}(p,q)$ (cf. \cite[p. 522 and Remark
3.3]{cfs}).} From Theorem \ref{intrinsictheo} and Remark
\ref{noempty}, it suffices to show that $f$ restricted to
$C^1_K(p,q)$ is $c$--precompact for some $c > \inf f(C^1_K(p,q))$,
i.e. every sequence $(z_m)_m$ in $C^1_K(p,q)$ such that
$(f(z_m))_m$ is upper bounded, has a uniformly convergent
subsequence. So, let us consider any $c > \inf f(C^1_K(p,q))$ and
a sequence of curves $(z_m)_m$ in $C^1_K(p,q)$, with
$z_m=(x_m,t_m)$, satisfying \be\label{hyp1} (f(z_m))_m \;
(\hbox{and thus}\; (\J(x_m))_m)\; \hbox{is upper bounded by $c$.}
\ee Setting
\[
C^1(x_p,x_q)\ =\ \Omega(x_p,x_q;S) \cap C^1(I, S),
\]
we have that
\[
(x_m)_m\subset C^1(x_p,x_q).
\]
It suffices to prove that
\begin{equation}\label{c1}
\hbox{$(\|\dot{x}_m\|_2)_m$ is bounded, up to a subsequence};
\end{equation}
indeed, by \eqref{pi} it follows that $(x_m)_m$ is bounded in $\Omega(x_p,x_q;S)$
and the supports of these curves are contained in a compact subset of $S$.
Hence, the Ascoli--Arzel\'a Theorem applies.\\
As we will see later, \eqref{c1} will be a direct consequence of the
following three claims.
\smallskip

\noindent {\em Claim 1.} If (\ref{c1}) does not hold, i.e.,
\begin{equation}\label{hyp2}
\|\dot x_m\|_2\ \to\ +\infty,
\end{equation}
then no compact subset of $S$ contains all the elements of the
sequence $(x_m)_m$.\\
{\em Proof of Claim 1.} Otherwise,  being
$(\beta(x_m))_m$ and $(|\delta(x_m)|)_m$ bounded
(with $|\delta(x_m)|^2=\left<\delta(x_m),\delta(x_m)\right>)$, by
\eqref{Jn1} and the Cauchy--Schwarz inequality it follows
$$
2\J(x_m)\geq \|\dot x_m\|^2_2- C_1\|\dot x_m\|_2 - C_2
$$
for some $C_1,C_2>0$ independent of $m\in\N$.
Hence  (\ref{hyp2}) implies
\begin{equation}\label{limit2}
\J(x_m) \to +\infty,
\end{equation}
in contradiction with
\eqref{hyp1}.
\smallskip

\noindent {\em Claim 2.} If no compact subset of $S$ contains all
the elements of the sequence $(x_m)_m$, then there exists some
$\varepsilon>0$ such that (recall (\ref{time}))
\be\label{hyp5} T_m:=T(x_m) > \Delta_t+\varepsilon\quad\hbox{for
infinitely many $m\in\N$.} \ee
{\em Proof of Claim 2.} Taking $\varepsilon>0$ provided
by Proposition \ref{p} {\sl (ii)}, let us assume by contradiction that
statement \eqref{hyp5} does not hold. This means that
\begin{equation}\label{er}
T_m\leq\Delta_t+\varepsilon\quad\hbox{for all $m$ big enough.}
\end{equation}
From Lemma \ref{l}, there exist future directed lightlike curves
$\gamma_m^l = (x_m,t^l_m)$ joining $p$ to $(x_q,t_p+T_m)$. Then,
from (\ref{er}), these curves can be prolonged with the integral
curves of $\partial_t$ to get future directed causal curves from
$p$ to $q_{\varepsilon}=(x_q,t_p+\Delta_t+\varepsilon)=(x_q,t_q+\varepsilon)$.
These  curves have support in $J^-(q_\varepsilon)$, so the curves
$(x_m,t_p)$ lie in the compact set $J^-(q_{\varepsilon})\cap
(S\times \{t_p\})$ (recall Proposition \ref{p} {\sl (ii)}), in
contradiction with the hypothesis.
\smallskip

\noindent{\em Claim 3.} Conditions \eqref{hyp2} and \eqref{hyp5}
imply \eqref{limit2}, up to a subsequence.\\
{\em Proof of Claim 3.}
If there exists a constant $c_1 > 0$ such that
\[
\left(\Delta_t - 2 \int_0^1\frac{\langle\delta(x_m),\dot
x_m\rangle}{\beta(x_m)}\ \de s\right) \left( \int_0^1\frac{\de
s}{\beta(x_m)}\right)^{-1} \le c_1 \quad \hbox{for infinitely many
$m\in\N$,}
\]
then the desired limit (\ref{limit2}) follows from \eqref{Jn1} and
\eqref{hyp2}.\\
Otherwise, assume that
\begin{equation}\label{limit3}
\left(\Delta_t - 2 \int_0^1\frac{\langle\delta(x_m),\dot
x_m\rangle}{\beta(x_m)}\ \de s\right) \left(
\int_0^1\frac{\de s}{\beta(x_m)}\right)^{-1} \ \longrightarrow\
+\infty\quad \hbox{as $m \to +\infty$.}
\end{equation}
Setting
\begin{eqnarray*}
\tilde T_m &=& \int_0^1\frac{\langle\delta(x_m),\dot x_m\rangle}{\beta(x_m)}\ \de s\\
&&+ \sqrt{\left( \int_0^1\frac{\langle\delta(x_m),\dot
x_m\rangle^2}{\beta(x_m)}\ \de s + \|\dot x_m\|^2\right)
\int_0^1\frac{\de s}{\beta(x_m)}},\
\end{eqnarray*}
the Cauchy--Schwarz inequality implies
\begin{equation}\label{5.4b}
T_m \le \tilde T_m \quad \forall m \in \N.
\end{equation}
Moreover,
\[
\begin{split}
&\int_0^1\frac{\langle\delta(x_m),\dot x_m\rangle^2}
{\beta(x_m)}\ \de s +
\|\dot x_m\|_2^2\\
&\qquad \displaystyle =\ \left(\tilde T_m -
\int_0^1\frac{\langle\delta(x_m),\dot x_m\rangle}{\beta(x_m)}\
\de s\right)^2 \left(\int_0^1\frac{\de s}{\beta(x_m)}\right)^{-1}.
\end{split}
\]
For infinitely many $m\in \N$, inequality \eqref{hyp5} holds and
\begin{equation}\label{ee}
\Delta_t - 2 \int_0^1\frac{\langle\delta(x_m),\dot
x_m\rangle}{\beta(x_m)}\ \de s\quad \hbox{is positive (recall
\eqref{limit3}).}
\end{equation}
Hence,
\begin{eqnarray*}
2\J(x_m) &=& \left(\tilde T_m -
\int_0^1\frac{\langle\delta(x_m),\dot x_m\rangle}{\beta(x_m)}\ \de
s\right)^2
\left(\int_0^1\frac{\de s}{\beta(x_m)}\right)^{-1}\\
&&-\ \left(\int_0^1\frac{\langle\delta(x_m),\dot
x_m\rangle}{\beta(x_m)}\ \de s\ -\ \Delta_t\right)^2
\left(\int_0^1\frac{\de s}{\beta(x_m)}\right)^{-1}\\
&=& \left(\tilde T_m^2 - \Delta_t^2 - 2 (\tilde T_m - \Delta_t)
\int_0^1\frac{\langle\delta(x_m),\dot x_m\rangle}{\beta(x_m)}\
\de s\right)\
\left(\int_0^1\frac{\de s}{\beta(x_m)}\right)^{-1}\\
&=& (\tilde T_m - \Delta_t)\ \left(\tilde T_m + \Delta_t - 2
\int_0^1\frac{\langle\delta(x_m),\dot x_m\rangle}{\beta(x_m)}\ \de
s\right)\ \left(\int_0^1\frac{\de s}{\beta(x_m)}\right)^{-1}\\
&\geq &
\varepsilon\left[ \tilde{T}_m
 +  \left(\Delta_t - 2
\int_0^1\frac{\langle\delta(x_m),\dot x_m\rangle}{\beta(x_m)}\ \de
s\right)\right] \left(\int_0^1\frac{\de s}{\beta(x_m)}\right)^{-1}\\
&\geq & \varepsilon \left(\Delta_t - 2
\int_0^1\frac{\langle\delta(x_m),\dot x_m\rangle}{\beta(x_m)}\ \de
s\right)\ \left(\int_0^1\frac{\de s}{\beta(x_m)}\right)^{-1},
\end{eqnarray*}
where, in the first inequality, we have taken into account
\eqref{hyp5}, (\ref{5.4b}) and (\ref{ee}). So, the limit
(\ref{limit3}) clearly implies the limit (\ref{limit2}), up to a subsequence.
\smallskip

\noindent
Summing up, if (\ref{c1}) does not hold, Claim 1 ensures that no
compact subset of $S$ contains all the elements of the sequence
$(x_m)_m$. Then, Claims 2 and 3 imply (\ref{limit2}), up to a
subsequence, in contradiction with \eqref{hyp1}.
\end{proof}

\section{Connecting geodesics in auxiliary stationary spacetimes}\label{ausi}

Throughout this section, $(\elle,\langle\cdot,\cdot\rangle_L)$
will be a spacetime which satisfies the hypotheses of Theorem \ref{tm}. From Proposition
\ref{condelta}, $\elle = S \times \R$ and
$\langle\cdot,\cdot\rangle_L$ is as in (\ref{metrica1}), with
metric coefficients given by Remark \ref{remark}.

For each $n\in\N$, let us consider the standard stationary
spacetime $(\elle_n,\langle\cdot,\cdot\rangle_n)$ (often simply
denoted by $\elle_n$), where $\elle_n = \elle$ and
\be\label{stati}
\langle\zeta,\zeta'\rangle_{n}\ =\
\langle\zeta,\zeta'\rangle_L -\,\frac{1}{n}\tau\tau'  =\
\langle\xi,\xi'\rangle+\langle\delta(x),\xi\rangle \, \tau' +
\langle\delta(x),\xi'\rangle \, \tau-\,\frac{1}{n}\tau\tau'
\ee
for any $ z = (x,t) \in {\mathcal  L}$, $\zeta = (\xi,\tau),
\zeta' = (\xi',\tau')\in T_z{\mathcal L} = T_xS \times \R$.

In the present section we are going to take advantage of Theorem
\ref{nuovo} to prove that each two points of $\elle$ are
geodesically connected in $\elle_n$,
for $n$ large enough. To avoid misunderstandings, the objects
associated to each spacetime
$\elle_n$ will be denoted by a
subindex $n$. So, the functional $f$ in \eqref{fuu} associated to
$\elle_n$ translates into
\be\label{statn} f_n(z)\ =\
\frac{1}{2}\int_{0}^{1}\langle\dot{z},\dot{z}\rangle_n \de s=\
\frac12\ \|\dot{x}\|_2^2\ + \int_0^1 \langle\delta(x),\dot
x\rangle \, \dot t \,\de s\ -\ \frac{1}{2n} \|\dot{t}\|_2^2. \ee
Analogously, the functional $\J$ in (\ref{Jn'}) becomes
\be\label{Jn}
\begin{split}
\J_n(x)\ =\ &\frac 12\|\dot{x}\|_2^2
+\frac n2\left[\int_0^1\<\delta(x),\dot x\>^2 \, \de s
-\left(\int_0^1 \<\delta(x),\dot x\> \, \de s\right)^2\right]\\
&-\Delta_t \left(\frac{\Delta_t}{2n} - \int_0^1\<\delta(x),\dot x\> \, \de s\right).
\end{split}
\ee
Furthermore, the geodesic equations \eqref{equa}, particularized to $\elle_n$ in
\eqref{stati}, translate into
\begin{equation}\label{metrican}
\left\{ \begin{array}{ll}
      {
\displaystyle
D_s\dot x - \dot t\,F(x)[\dot x] + \ddot t\,\delta(x) = 0} \\
{\displaystyle \frac {\de}{\de s}\left(\frac 1n\dot t -
\<\delta(x),\dot x\>\right) = 0.}
      \end{array}
      \right.
      \end{equation}

With these ingredients, now we can establish the announced result.

\begin{proposition}\label{p1} Let $(\elle,\langle\cdot,\cdot\rangle_L)$ be
a spacetime as in Theorem \ref{tm}. Given two points
$p=(x_p,t_p)$, $q=(x_q,t_q) \in \elle$ with $\Delta_t=t_q-t_p \ge
0$, there exists $n_0\in \N$ such that $p$ and $q$ are connected
by a geodesic $\gamma_n=(x_n,t_n)$ in $(\elle_n,
\langle\cdot,\cdot\rangle_n)$ for every $n\geq n_0$.
\end{proposition}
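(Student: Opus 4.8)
The plan is to deduce the statement from Theorem \ref{nuovo}, applied in each $\elle_n$ to the given $p,q$. By Proposition \ref{condelta} we may assume $\elle=S\times\R$ with $\langle\cdot,\cdot\rangle_L$ of the form \eqref{metrica1}; then, for every $n$, $\elle_n$ in \eqref{stati} is a standard stationary spacetime as in \eqref{metrica} with $\beta\equiv 1/n>0$ over the complete Riemannian manifold $(S,\langle\cdot,\cdot\rangle)$, and $\Delta_t=t_q-t_p\ge 0$ by hypothesis. Hence, writing $J^-_n$ (resp.\ $J^-_L$, $I^-_L$) for the causal past in $\elle_n$ (resp.\ the causal, chronological past in $\elle$), Theorem \ref{nuovo} applies to $p,q$ in $\elle_n$ as soon as
\[
J^-_n(q)\cap\bigl(S\times\{t_p\}\bigr)\ \text{ is compact in }\ S\times\{t_p\}.
\]
So the whole content of Proposition \ref{p1} is the verification of this compactness for all large $n$.

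I would first collect some elementary remarks. Since $\langle\cdot,\cdot\rangle_n=\langle\cdot,\cdot\rangle_L-\tfrac1n\,\de t\otimes\de t$, an $\elle_n$-causal vector is $\elle_m$-causal for every $m\le n$; hence the causal pasts are nested, $J^-_{n+1}(q)\subseteq J^-_n(q)\subseteq\dots$, and $\bigcap_n J^-_n(q)=J^-_L(q)$ — the inclusion ``$\supseteq$'' is clear, and for ``$\subseteq$'' one takes, given a point lying in all the $J^-_n(q)$, connecting $\elle_n$-causal curves (all of them $\elle_1$-causal), reparametrizes them by $\langle\cdot,\cdot\rangle_R$-arclength so their tangent vectors stay bounded, and extracts a limit curve as in \cite[Proposition 3.31]{bee}, the limit being $\elle_L$-causal because $\langle\cdot,\cdot\rangle_n\to\langle\cdot,\cdot\rangle_L$ pointwise on tangent vectors. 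Moreover $\delta$ is non-vanishing (Proposition \ref{condelta}), so through each point of $\elle$ there pass $\elle_L$-timelike curves returning to the same fibre $\{x\}\times\R$; therefore, fixing the upward translates $q_{1/2}=(x_q,t_q+\tfrac12)$ and $q_1=(x_q,t_q+1)$, one has $q$ in the chronological past of $q_{1/2}$ and $q_{1/2}$ in that of $q_1$, so that $J^-_L(q)\subseteq I^-_L(q_{1/2})\subseteq J^-_L(q_1)$ by the push-up property; finally, $\elle$ being globally hyperbolic (hence causally simple), $J^-_L(q_1)\cap(S\times\{t_p\})$ is compact while $I^-_L(q_{1/2})\cap(S\times\{t_p\})$ is open in $S\times\{t_p\}$, whence an open subset of the former.

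The crux is then the following claim: there exists $n_0$ such that $J^-_{n_0}(q)\cap(S\times\{t_p\})\subseteq J^-_L(q_1)\cap(S\times\{t_p\})$. Granting it, for every $n\ge n_0$ the set $J^-_n(q)\cap(S\times\{t_p\})$ is closed in $S\times\{t_p\}$ (Proposition \ref{p}\,(i)) and, by the nesting, contained in the compact set $J^-_L(q_1)\cap(S\times\{t_p\})$, hence itself compact; Theorem \ref{nuovo} then produces the connecting geodesic $\gamma_n=(x_n,t_n)$ in $\elle_n$. To prove the claim I would argue by contradiction: otherwise, along a subsequence $n\to\infty$ there are $(\bar x_n,t_p)\in J^-_n(q)$ with $(\bar x_n,t_p)\notin J^-_L(q_1)$. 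The first — and decisive — point is to show that $(\bar x_n)_n$ stays in a $\langle\cdot,\cdot\rangle$-bounded, hence (by completeness of $S$) relatively compact, subset of $S$. Once this is known, passing to a subsequence $\bar x_n\to\bar x_\infty$ and using that for each fixed $m$ the connecting $\elle_n$-causal curves are $\elle_m$-causal for $n\ge m$ (prolong them into the past by integral lines of the timelike field $-\partial_t$ to get past-inextendible $\elle_m$-causal curves departing from $q$ and passing through $(\bar x_n,t_p)$, then apply the limit curve theorem in the fixed spacetime $\elle_m$), one gets $(\bar x_\infty,t_p)\in J^-_m(q)$ for every $m$, i.e.\ $(\bar x_\infty,t_p)\in\bigcap_m J^-_m(q)=J^-_L(q)\subseteq I^-_L(q_{1/2})$. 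Thus $\bar x_\infty$ belongs to the open set $I^-_L(q_{1/2})\cap(S\times\{t_p\})$, which is contained in $J^-_L(q_1)\cap(S\times\{t_p\})$; on the other hand each $\bar x_n$ lies outside this last (closed) set, so its limit $\bar x_\infty$ lies outside the interior of $J^-_L(q_1)\cap(S\times\{t_p\})$ — a contradiction, since $I^-_L(q_{1/2})\cap(S\times\{t_p\})$ is an open subset of it.

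The step I expect to be the main obstacle is precisely the boundedness of $(\bar x_n)_n$: it is here that the hypothesis ``$\elle$ globally hyperbolic'' does real work. The natural route is to show that $\elle_1$ (equivalently, each $\elle_n$) is itself globally hyperbolic with the $S\times\{c\}$ still Cauchy hypersurfaces — $\elle_n$ is obtained from the globally hyperbolic $\elle$ by widening its cones along the Cauchy temporal function $t$ (one checks $\langle\nabla t,\nabla t\rangle_L=-1/|\delta|^2<0$, so $t$ is temporal for $\elle$, with the spacelike Cauchy levels $S\times\{c\}$ as its slices, and $\langle\cdot,\cdot\rangle_n=\langle\cdot,\cdot\rangle_L-\tfrac1n\,\de t\otimes\de t$), a perturbation under which the $S\times\{c\}$ remain Cauchy — whence $J^-_1(q)\cap(S\times\{t_p\})$ is compact by \cite[Proposition 6.6.6]{HE} and contains all the $\bar x_n$. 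If one prefers to avoid invoking global hyperbolicity of the auxiliary spacetimes, the alternative is to work directly with a connecting $\elle_n$-causal curve $\gamma_n=(y_n,s_n)$: reparametrized by $t$ on $[t_p,t_q]$ it satisfies $|\dot y_n|^2+2\langle\delta(y_n),\dot y_n\rangle\le 1/n$ a.e.\ and $\int_{t_p}^{t_q}(|\dot y_n|^2+2\langle\delta(y_n),\dot y_n\rangle)\,\de s\le\Delta_t/n$, so it is ``almost'' $\elle_L$-causal; one then has to repair it into a genuinely $\elle_L$-causal curve joining a point near $\bar x_n$ to a point near $x_q$ in a uniformly bounded amount of $t$-time — exploiting that the portions on which $\gamma_n$ fails to be $\elle_L$-causal are spatially slow and can be corrected cheaply — and conclude via the compactness of $J^-_L(q_1)\cap(S\times\{t_p\})$ that $\bar x_n$ cannot escape to infinity. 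This is exactly the sort of fine estimate the introduction warns about, and it is the technically delicate part of the argument; the limit-curve manipulations and the buffer-point argument above are, by comparison, routine.
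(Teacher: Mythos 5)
Your reduction of the statement to the compactness of $J^-_n(q)\cap(S\times\{t_p\})$ for large $n$, via Theorem \ref{nuovo} applied to the standard stationary spacetimes $\elle_n$ (which are indeed of the form \eqref{metrica} with $\beta\equiv 1/n$), is exactly the paper's first step, and your remarks on the nesting of the causal pasts are fine. The trouble is in your verification of the compactness. A first, repairable, error: the buffer inclusion $J^-_L(q)\subseteq I^-_L(q_{1/2})$ is false in general, because $q$ need not chronologically precede $q_{1/2}=(x_q,t_q+\tfrac12)$. Along any future--directed timelike curve $\varphi=(y,t)$ one has $|\dot y|^2+2\<\delta(y),\dot y\>\dot t<0$, so $\<\delta(y),\dot y\>=\<\dot\varphi,K(\varphi)\>_L$ never vanishes and has a fixed sign; if the spatial projection returns to $x_q$ and $\delta$ is a gradient along the relevant region, then $\int_0^1\<\delta(y),\dot y\>\,\de s=0$, a contradiction. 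So no point of the fibre $\{x_q\}\times\R$ need lie in $I^+_L(q)$ --- and this is exactly what happens in generalized plane waves ($\<\dot\varphi,K\>_L=\dot u$) and in the examples of Section \ref{s8}. One can repair this by replacing $q_{1/2},q_1$ with an arbitrary $q'\in I^+_L(q)\neq\emptyset$, since then $J^-_L(q)\subseteq I^-_L(q')\subseteq J^-_L(q')$ with the middle set open and the last one with compact trace on $S\times\{t_p\}$.

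The fatal gap is the one you flag yourself: the boundedness of the escaping sequence $(\bar x_n)_n$, without which no limit point $\bar x_\infty$ exists and your contradiction scheme collapses. Neither of your routes closes it. The claim that the slices $S\times\{c\}$ remain Cauchy for every $\elle_n$ because $\<\cdot,\cdot\>_n$ widens the cones of $\<\cdot,\cdot\>_L$ along the temporal function $t$ is unjustified: widening the cones of a globally hyperbolic metric does not in general preserve the Cauchy character of a prescribed family of slices (stability arguments produce \emph{some} wider globally hyperbolic metric, not this particular sequence uniformly in $n$; if $|\delta|$ decays at spatial infinity, $\tfrac1n\,\de t\otimes\de t$ is not small relative to the steepness of $t$). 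Establishing this uniform good behaviour for large $n$ is essentially the content of the proposition, so assuming it is circular; your second route is, as you admit, only a sketch. The paper's proof avoids needing boundedness altogether by turning unboundedness itself into the contradiction: assuming non--compactness for infinitely many $m$, it uses Proposition \ref{p}\,{\sl(i)} and Hopf--Rinow to produce an unbounded sequence $y_m\in J^-_m(q)\cap(S\times\{t_p\})$, takes past--inextendible $\<\cdot,\cdot\>_m$--causal curves from $q$ through $y_m$, applies the limit--curve theorem (each tail $m\ge n_0$ consisting of $\<\cdot,\cdot\>_{n_0}$--causal curves), and obtains a past--inextendible $\<\cdot,\cdot\>_L$--causal curve from $q$ which cannot meet $S\times\{t_p\}$, contradicting the Cauchy property of that slice in $(\elle,\<\cdot,\cdot\>_L)$. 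To keep your architecture you would have to prove the boundedness by essentially this same argument, at which point the buffer machinery becomes superfluous.
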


\begin{proof}
From Theorem \ref{nuovo} applied to each $\elle_n$, it suffices to
prove the existence of some $n_0\in\N$ such that
\be\label{uff}
J_n^-(q)\cap (S\times\{t_p\})\quad\hbox{ is compact in
$S\times\{t_p\}$ for all $n\geq n_0$.} \ee
Arguing by contradiction, assume that condition \eqref{uff} is false for
infinitely many $(\elle_m, \langle\cdot,\cdot\rangle_m)$. Then, by
the Hopf--Rinow Theorem, since $(S,\langle\cdot,\cdot\rangle)$ is
complete and $J_m^-(q)\cap (S\times\{t_p\})$ is closed
(Proposition \ref{p} {\sl (i)}), this last set cannot be bounded.
Hence, for each $m$, there exists an unbounded sequence of points
$(y^m_k)_k$ in $J_m^-(q)\cap (S\times\{t_p\})$. Then, by using a
Cantor's diagonal type argument applied to the family of these
sequences, for each $m$ there exists $k_m\in \N$ such that,
denoting $y_m=y^m_{k_m}$ with $y_m\in J_m^{-}(q)\cap (S\times
\{t_p\})$, the sequence $(y_m)_m$ is still unbounded. Let
$(\gamma_m)_m$ be a sequence of past inextendible
$\langle\cdot,\cdot\rangle_m$--causal curves departing from $q$
and passing through $y_m$ (recall Footnote $1$). Taking any $n_0
\in \N$, if $m\geq n_0$ then $\gamma_m$ is not only causal for
$\langle\cdot,\cdot\rangle_m$, but also for
$\langle\cdot,\cdot\rangle_{n_0}$ (by the metric expression
\eqref{stati}). From \cite[Proposition 3.31]{bee} applied to the
sequence of curves $(\gamma_m)_m$ in $({\mathcal
L},\<\cdot,\cdot\>_L)$, we obtain an inextendible limit curve
$\gamma=(x,t)$ departing from $q$, which is
$\langle\cdot,\cdot\rangle_n$--causal for all $n$, and thus,
$\langle\cdot,\cdot\rangle_L$--causal. Since $(\gamma_m)_m$
intersects $S\times \{t_p\}$ in an unbounded sequence of points,
the limit curve $\gamma$ cannot intersect $S\times\{t_p\}$, in
contradiction with the Cauchy character of the hypersurface
$S\times\{t_p\}$ in $({\mathcal L},\<\cdot,\cdot\>_L)$.
\end{proof}

\begin{remark}\label{minimi} {\em
We recall  that a $C^1$ functional $J\colon \Omega\to\R$, defined on
a Hilbert manifold $\Omega$, satisfies the {\em Palais--Smale condition} if
each sequence $(x_n)_n\subset \Omega$, such that $(J(x_n))_n$ is bounded
and $\de J(x_n)\to 0$ admits a converging subsequence.\\
The spatial components $x_n$ of the connecting geodesics $\gamma_n=(x_n,t_n)$
provided by Proposition \ref{p1} are minimum of the functionals $\J_n$ in \eqref{Jn}:
indeed, the $c$--precompactness  of $\Omega_K(p,q)$ for $\J_n$
for $n\geq n_0$ (cf. Theorem \ref{nuovo}), implies that the functionals $\J_n$ are bounded from below, satisfy the
Palais--Smale condition and have complete sublevels, so that they attain their infimum
(see \cite[Propositions 4.3 and 5.5, Theorem 5.3]{gp} and also \cite[Theorem 3.3]{bcc}).}
\end{remark}

\section{Proof of Theorem \ref{tm}}\label{s3}

Let $(\elle,\langle\cdot,\cdot\rangle_L)$ be a spacetime as in
Theorem \ref{tm}. In particular, by Proposition \ref{condelta} $\elle = S \times \R$ and
$\langle\cdot,\cdot\rangle_L$ is as in (\ref{metrica1}), with
metric coefficients given by Remark \ref{remark}. Consider two
points $p=(x_p,t_p)$, $q=(x_q,t_q) \in \elle$ with
$\Delta_t=t_q-t_p \ge 0$ and assume the existence of a $C^1$ curve
$\varphi=(y,t): I \to \elle$ connecting them such that
$\langle\dot{\varphi},K(\varphi)\rangle_L=\langle\delta(y),\dot{y}\rangle$
has constant sign or is identically equal to $0$.

Let $\left(\gamma_n=(x_n,t_n)\right)_{n\geq n_0}$ be the sequence
of curves connecting $p$ to $q$, each $\gamma_n$ geodesic in $\elle_n$,
as stated in Proposition \ref{p1}. Then, the following technical results hold:

\begin{lemma}\label{bounded} The sequence $\left(\|\dot{x}_n\|_2\right)_{n\geq n_0}$ is
bounded.
\end{lemma}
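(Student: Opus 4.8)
The plan is to exploit the explicit formula \eqref{Jn} for $\J_n$ together with the fact, recorded in Remark \ref{minimi}, that each $x_n$ is a \emph{minimum} of $\J_n$. The key comparison curve is the spatial component $y$ of the given $C^1$ curve $\varphi=(y,t)$; by minimality, $\J_n(x_n)\le\J_n(y)$ for every $n\ge n_0$. First I would observe that, since $y$ is a fixed $C^1$ curve on $S$, the quantity $\int_0^1\<\delta(y),\dot y\>\,\de s$ and the bracketed ``variance'' term $\int_0^1\<\delta(y),\dot y\>^2\de s-(\int_0^1\<\delta(y),\dot y\>\de s)^2$ are both fixed numbers, the latter being $\ge 0$ by Cauchy--Schwarz. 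Hence from \eqref{Jn},
\[
\J_n(y)\ =\ \tfrac12\|\dot y\|_2^2\ +\ \tfrac n2\,V(y)\ -\ \Delta_t\Big(\tfrac{\Delta_t}{2n}-\textstyle\int_0^1\<\delta(y),\dot y\>\,\de s\Big),
\]
where $V(y)\ge 0$ is the variance term. This is $O(n)$ in general, which by itself only gives $\|\dot x_n\|_2^2=O(n)$ — not good enough. The point of hypothesis \emph{(ii)} is precisely to kill the bad term: if $\<\delta(y),\dot y\>$ has constant sign or is $\equiv 0$, then I can \emph{reparametrize} $y$ so that $\<\delta(y),\dot y\>$ is a.e. constant. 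Indeed, replacing $y$ by $y\circ\sigma$ for a suitable monotone $C^1$ (or piecewise $C^1$) change of parameter $\sigma:I\to I$ does not change $\J_n(y\circ\sigma)\le\J_n(y\circ\sigma)$ being an admissible comparison curve; and one can choose $\sigma$ so that $\<\delta(y\circ\sigma),(y\circ\sigma)^{\boldsymbol\cdot}\>$ is constant, which forces $V(y\circ\sigma)=0$. (When $\<\delta(y),\dot y\>\equiv 0$ no reparametrization is needed; when it has strict constant sign, $s\mapsto\int_0^s\<\delta(y),\dot y\>\,\de\tau$ is a strictly monotone $C^1$ diffeomorphism of $I$ onto an interval, and its inverse is the desired $\sigma$.) After this reduction we have a fixed comparison curve $\tilde y$ with $V(\tilde y)=0$, so
\[
\J_n(x_n)\ \le\ \J_n(\tilde y)\ =\ \tfrac12\|\dot{\tilde y}\|_2^2\ -\ \tfrac{\Delta_t^2}{2n}\ +\ \Delta_t\textstyle\int_0^1\<\delta(\tilde y),\dot{\tilde y}\>\,\de s\ \le\ C
\]
for a constant $C$ independent of $n$.

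Next I would turn this uniform upper bound on $\J_n(x_n)$ into a bound on $\|\dot x_n\|_2$. From \eqref{Jn}, dropping the nonnegative variance term (Cauchy--Schwarz again, exactly as in \eqref{Jn1}),
\[
2\J_n(x_n)\ \ge\ \|\dot x_n\|_2^2\ -\ 2\Delta_t\,|\delta|_{\max}\,\|\dot x_n\|_2\ -\ \frac{\Delta_t^2}{n},
\]
where $|\delta|_{\max}$ is a bound for $|\delta(x)|$ over the relevant compact region of $S$. There is a subtlety: $|\delta|_{\max}$ should not depend on $n$, i.e.\ the supports of the $x_n$ must lie in a fixed compact set. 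This is where I would invoke the argument already used inside the proof of Theorem \ref{nuovo}: if $\|\dot x_n\|_2\to+\infty$ along a subsequence, then (Claim 1 there) the $x_n$ escape every compact set, hence (Claims 2 and 3) $\J_n(x_n)\to+\infty$, contradicting the uniform bound just obtained. So in fact one argues by contradiction from the start — assume $\|\dot x_n\|_2\to\infty$, derive that the curves leave every compact subset of $S$, and then re-run the Claim 2/Claim 3 mechanism (with $\langle\cdot,\cdot\rangle_n$ in place of $\langle\cdot,\cdot\rangle_L$, using the same Proposition \ref{p} and Lemma \ref{l} adapted to $\elle_n$) to force $\J_n(x_n)\to+\infty$, against $\J_n(x_n)\le C$. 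The quadratic-versus-linear coercivity inequality above then also directly yields the contradiction in the ``no escape'' case, since there $|\delta|_{\max}$ is genuinely $n$-independent.

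I expect the main obstacle to be the reparametrization step — making precise that replacing $y$ by $\tilde y=y\circ\sigma$ is legitimate (i.e.\ $\tilde y\in\Omega(x_p,x_q;S)\cap C^1(I,S)$ and $\J_n(\tilde y)$ is computed by the same formula \eqref{Jn}, using only that $\J_n$ depends on $x$ through the integrals appearing there, which are reparametrization-covariant in the right way) and that the variance term $V(\tilde y)$ vanishes. This is the only place the hypothesis \emph{(ii)} of Theorem \ref{tm} is used in this lemma, and it is exactly what distinguishes the present situation from the generic lightlike case where the conclusion fails. The rest is bookkeeping: combining $\J_n(x_n)\le\J_n(\tilde y)\le C$ with the coercivity inequality, or with the escape-to-infinity argument of Theorem \ref{nuovo}, to conclude that $(\|\dot x_n\|_2)_{n\ge n_0}$ is bounded.
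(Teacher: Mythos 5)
Your proposal is correct and follows essentially the same route as the paper: argue by contradiction, reparametrize the spatial component $y$ of $\varphi$ so that $\langle\delta(\tilde y),\dot{\tilde y}\rangle$ is constant (this is exactly where hypothesis \emph{(ii)} enters, killing the variance term in \eqref{Jn} and giving an $n$--independent upper bound for $\J_n(\tilde y)$), and then play this against the minimality of $x_n$ from Remark \ref{minimi} and the escape--to--infinity mechanism of Theorem \ref{nuovo}. The one place where your outline would need repair is the suggestion to ``re-run Claims 2 and 3 with $\langle\cdot,\cdot\rangle_n$ in place of $\langle\cdot,\cdot\rangle_L$'': for varying $n$ the $\varepsilon$ of Proposition \ref{p} \textsl{(ii)} and the estimates in Claim 3 depend on the metric (here $\beta\equiv 1/n$ degenerates), so uniformity is not automatic. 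The paper avoids this by fixing a single $\bar n\ge n_0$, applying the three claims to the sequence $(x_n)_n$ inside the one spacetime $\elle_{\bar n}$ to get $\J_{\bar n}(x_n)\to+\infty$, and then using the monotonicity $\J_n(x_n)\ge\J_{\bar n}(x_n)$ for $n\ge\bar n$ (immediate from \eqref{Jn} and Cauchy--Schwarz) to conclude that $\J_n(x_n)$ is unbounded above, contradicting $\J_n(x_n)\le\J_n(\tilde y)\le C$.
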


\begin{proof} Arguing by contradiction, assume that $(\|\dot{x}_n\|_2)_{n\ge
n_0}$ is not bounded. Taking any $\bar n \ge n_0$, the three
claims in the proof of Theorem \ref{nuovo} imply that
$\left({\mathcal J}_{\bar n}(x_n)\right)_{n\ge n_0}$ is not upper
bounded either. By the expression of the functionals in \eqref{Jn}
and the Cauchy--Schwarz inequality, it follows that
\[
{\mathcal J}_{n}(x_n)\geq {\mathcal J}_{\bar n}(x_n) \quad\hbox{
for all } n\geq\bar n.
\]
Whence, also $\left({\mathcal J}_{n}(x_n)\right)_{n\ge n_0}$ is
not bounded from above.\\
Next, assume that $\langle\delta(y),\dot{y}\rangle\not\equiv 0$ on
$I$. Then, the reparametrized curve $\tilde {y}(s)=y(r(s))$, with
\[
{r}(s)\ =\ \int_0^s\frac{1}{\langle\delta(y(r)),\dot
y(r)\rangle}\de r,\] satisfies
\[
\langle\delta(\tilde{y}(s)),\dot{\tilde{y}}(s)\rangle=\langle\delta(y(r)),\dot y(r)\rangle\,
\dot{r}(s) =1.
\]
In particular,
\[
\int_0^1\langle\delta(\tilde y),\dot{\tilde{y}}\rangle^2\,\de s -
\left(\int_0^1\langle\delta(\tilde y),\dot{\tilde{y}}\rangle\,\de
s\right)^2=0,
\]
and this equality holds also when
$\langle\delta(y),\dot{y}\rangle\equiv 0$ on $I$.
So, at any case we deduce
\begin{eqnarray*}
{\mathcal J}_n(\tilde{y})\ &=&\ \frac{1}{2}\|\dot{\tilde{y}}\|_2^2
- \Delta_t \left(\frac{\Delta_t}{2n} -
\int_0^1\langle\delta(\tilde y),\dot{\tilde{y}}\rangle\, \de s\right) \\
&\le&\ \frac{1}{2}\|\dot{\tilde{y}}\|_2^2 + \Delta_t
\int_0^1\langle\delta(\tilde y),\dot{\tilde{y}}\rangle\, \de
s\qquad \hbox{for all $n \in \N$.}
\end{eqnarray*}
Therefore, ${\mathcal J}_n(\tilde{y})$ admits an upper bound
independent of $n$, and thus
\[
{\mathcal J}_n(\tilde{y})\ <\ {\mathcal J}_n(x_n)\qquad\hbox{for
infinitely many $n$,}
\]
in contradiction with the minimum character of $x_n$, as stated in
Remark \ref{minimi}.
\end{proof}

\begin{lemma}\label{boundedt} The sequence $\left(\|\dot{t}_n\|_2\right)_{n\geq n_0}$ is
bounded.
\end{lemma}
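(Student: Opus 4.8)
\textbf{Plan for Lemma \ref{boundedt}.}

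The plan is to control $\|\dot t_n\|_2$ by combining the conserved quantity of the second equation in the geodesic system \eqref{metrican}, the already-established bound on $\|\dot x_n\|_2$ from Lemma \ref{bounded}, and a uniform bound on the conservation constant coming once again from the minimum character of $x_n$ (Remark \ref{minimi}) together with property {\sl (ii)} of Theorem \ref{tm}. First I would use the second equation of \eqref{metrican}: along $\gamma_n$ the quantity $\tfrac1n\dot t_n-\langle\delta(x_n),\dot x_n\rangle$ is constant on $I$; call it $-C_n$ (this is the analogue of \eqref{dott}--\eqref{cz} for the metric $\langle\cdot,\cdot\rangle_n$, so $C_n=(\int_0^1\langle\delta(x_n),\dot x_n\rangle\,\de s-\Delta_t)(\int_0^1\tfrac1n\,\de s)^{-1}=n\bigl(\int_0^1\langle\delta(x_n),\dot x_n\rangle\,\de s-\Delta_t\bigr)$). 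Hence $\tfrac1n\dot t_n=\langle\delta(x_n),\dot x_n\rangle-C_n$ pointwise, and therefore
\[
\|\dot t_n\|_2\ \le\ n\,\|\langle\delta(x_n),\dot x_n\rangle\|_2\ +\ n\,|C_n|.
\]
The first term on the right is \emph{not} obviously bounded because of the factor $n$, so this naive estimate is insufficient; the real work is to show the combination $\dot t_n=n(\langle\delta(x_n),\dot x_n\rangle-C_n)$ stays bounded in $L^2$, i.e. that the $O(1/n)$ smallness of $\langle\delta(x_n),\dot x_n\rangle-C_n$ is genuine.

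To get this, I would go back to the functional. Using $f_n(\gamma_n)=\J_n(x_n)$ and the expression \eqref{Jn}, the middle bracket $\int_0^1\langle\delta(x_n),\dot x_n\rangle^2\,\de s-\bigl(\int_0^1\langle\delta(x_n),\dot x_n\rangle\,\de s\bigr)^2$ equals $\tfrac1n\bigl(\tfrac1n\|\dot t_n\|_2^2\bigr)$ up to the algebra already carried out in Section \ref{s4} — more precisely, combining \eqref{dott} (in the $\langle\cdot,\cdot\rangle_n$ form) with \eqref{Jn} one finds
\[
\frac{1}{2n}\|\dot t_n\|_2^2\ =\ \frac n2\!\left[\int_0^1\!\langle\delta(x_n),\dot x_n\rangle^2\de s-\Bigl(\int_0^1\!\langle\delta(x_n),\dot x_n\rangle\,\de s\Bigr)^2\right]+\text{(lower order terms involving }\Delta_t,\ C_n).
\]
So a bound on $\|\dot t_n\|_2$ is equivalent to bounding $n$ times that bracket, plus bounding $C_n^2/n$. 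For both I would compare $\J_n(x_n)$ with $\J_n(\tilde y)$, where $\tilde y$ is the reparametrization of the test curve $\varphi$ used in the proof of Lemma \ref{bounded} (for which $\langle\delta(\tilde y),\dot{\tilde y}\rangle$ is constant, making its bracket vanish identically and its $C_n$-term $O(1/n)$): the minimum property $\J_n(x_n)\le\J_n(\tilde y)$ gives $\J_n(x_n)\le A$ for a constant $A$ independent of $n$. Since $\|\dot x_n\|_2$ is bounded (Lemma \ref{bounded}) and $\int_0^1\langle\delta(x_n),\dot x_n\rangle\,\de s$ is then also bounded (Cauchy--Schwarz, with $|\delta(x_n)|$ bounded because the supports $x_n(I)$ lie in a fixed compact set — which itself follows from Lemma \ref{bounded} and \eqref{pi}), the expression $\J_n(x_n)\le A$ forces the nonnegative bracket times $n$ to be bounded, and simultaneously forces $C_n^2/n\le\text{const}$, hence $|C_n|=O(\sqrt n)$; feeding both back into the displayed identity yields $\|\dot t_n\|_2^2\le\text{const}$.

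\textbf{Main obstacle.} The delicate point is precisely the cancellation inside \eqref{Jn}: both the bracket and the constant $C_n$ blow up or could blow up individually, and only the \emph{signed} combinations appearing in $\J_n$ and in $\dot t_n=n(\langle\delta(x_n),\dot x_n\rangle-C_n)$ are tame. I expect the proof to hinge on writing $\dot t_n$ in terms of $\langle\delta(x_n),\dot x_n\rangle-C_n$, observing $\langle\delta(x_n),\dot x_n\rangle-C_n=\tfrac1n\dot t_n$ so that $\int_0^1(\langle\delta(x_n),\dot x_n\rangle-C_n)^2\,\de s=\tfrac1{n^2}\|\dot t_n\|_2^2$, and then recognizing that (by the same completion-of-the-square computation that produced \eqref{Jn'}--\eqref{Jn}) one has, for some constant independent of $n$,
\[
\frac{1}{2n}\,\|\dot t_n\|_2^2\ \le\ \J_n(\tilde y)-\frac12\|\dot x_n\|_2^2+\Delta_t\!\int_0^1\!\langle\delta(x_n),\dot x_n\rangle\,\de s\ \le\ \text{const},
\]
which still contains the stray factor $1/n$ on the left — so in fact this only gives $\|\dot t_n\|_2=O(\sqrt n)$, and the sharper $O(1)$ bound must come from the refined estimate in which the $n$-dependence on the right-hand side is tracked more carefully (the term $-\Delta_t^2/(2n)$ in \eqref{Jn} and the exact value of $\J_n(\tilde y)=\tfrac12\|\dot{\tilde y}\|_2^2+\Delta_t\int_0^1\langle\delta(\tilde y),\dot{\tilde y}\rangle\,\de s-\Delta_t^2/(2n)$). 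I would therefore be prepared to argue by contradiction: if $\|\dot t_n\|_2/\text{(bound)}\to\infty$ along a subsequence, normalize and extract a weak limit to derive a contradiction with the minimality of $x_n$, mirroring the structure of Lemma \ref{bounded}'s proof.
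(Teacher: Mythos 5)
There is a genuine gap, and you have in fact located it yourself: the variational/minimality route cannot produce the $O(1)$ bound. The conservation law from the second equation of \eqref{metrican} gives $\dot t_n=\Delta_t+n\bigl(\langle\delta(x_n),\dot x_n\rangle-\int_0^1\langle\delta(x_n),\dot x_n\rangle\,\de s\bigr)$, so the bracket in \eqref{Jn} is exactly $\tfrac{1}{2n}\|\dot t_n-\Delta_t\|_2^2$, and the comparison $\J_n(x_n)\le\J_n(\tilde y)$ yields
\[
\frac{1}{2n}\,\|\dot t_n-\Delta_t\|_2^2\ \le\ \frac12\|\dot{\tilde y}\|_2^2-\frac12\|\dot x_n\|_2^2+\Delta_t\left(\int_0^1\langle\delta(\tilde y),\dot{\tilde y}\rangle\,\de s-\int_0^1\langle\delta(x_n),\dot x_n\rangle\,\de s\right),
\]
whose right-hand side is bounded but has no reason to be $o(1)$ (the $-\Delta_t^2/(2n)$ terms cancel identically on both sides, so tracking them more carefully buys nothing). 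Hence this argument genuinely stops at $\|\dot t_n\|_2=O(\sqrt n)$, and the closing suggestion --- normalize by the putative unbounded quantity and extract a weak limit to contradict minimality --- is not carried out and it is unclear what the contradiction would be, since the functional only sees $\dot t_n$ through the $1/n$-weighted term. (A minor additional slip: your formula $C_n=n(\int_0^1\langle\delta(x_n),\dot x_n\rangle\,\de s-\Delta_t)$ misapplies \eqref{cz}; with $\beta\equiv 1/n$ one gets $C_n=\int_0^1\langle\delta(x_n),\dot x_n\rangle\,\de s-\Delta_t/n$.)

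The missing idea is to use the \emph{first} geodesic equation in \eqref{metrican}, which your proposal never touches. Taking its scalar product with $\delta(x_n)$ and dividing by $\langle\delta(x_n),\delta(x_n)\rangle$ (non-vanishing by Proposition \ref{condelta}, and uniformly pinched since the supports of the $x_n$ lie in a fixed compact set by Lemma \ref{bounded} and \eqref{pi}) turns $\tau_n=\dot t_n$ into the solution of a linear first-order ODE $\dot\tau_n=a_n\tau_n+b_n$ with $\|a_n\|_{L^1}$ uniformly bounded via $|a_n|\le c\,|\dot x_n|$ and Lemma \ref{bounded}. The mean value theorem applied to $\int_0^1\dot t_n\,\de s=\Delta_t$ supplies an initial condition $\dot t_n(s_n)=\Delta_t$, so the integrating-factor formula gives $\dot t_n(s)=e^{A_n(s)}(g_n(s)+\Delta_t)$ with $e^{A_n}$ uniformly pinched. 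The term $g_n$ involves $\langle D_s\dot x_n,\delta(x_n)\rangle$, which is \emph{not} controlled by Lemma \ref{bounded}; one integrates by parts to trade it for first derivatives of $x_n$, and the resulting boundary term at $s_n$ is controlled precisely by the conservation law you do use, which forces $\langle\delta(x_n(s_n)),\dot x_n(s_n)\rangle=\int_0^1\langle\delta(x_n),\dot x_n\rangle\,\de s$. This yields the pointwise bound $|\dot t_n(s)|\le c\,(|\dot x_n(s)|+1)$ and hence the uniform $L^2$ bound. In short: the bound on $\dot t_n$ is an ODE estimate read off from the spatial geodesic equation, not an energy estimate, and without that ingredient the proposal does not close.
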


\begin{proof}\footnote{Along this proof, for any integer $j\ge 1$ the constant $c_j$ will always denote a strictly
positive real number which does not depend on $s \in I$ and $n \ge n_0$.}
Taking the scalar product of the first equation in \eqref{metrican} applied to $\gamma_n=(x_n,t_n)$,
$n\geq n_0$, by the vector field $\delta(x_n)$, we get
\[
\langle D_{s}\dot{x}_n,\delta(x_n)\rangle -\dot{t}_n\langle F(x_n)[\dot{x}_n],\delta(x_n)\rangle
+\ddot{t}_n\langle\delta(x_n),\delta(x_n)\rangle \equiv 0 \quad\hbox{ on } I.
\]
So, $\tau_n=\dot{t}_n$ satisfies the first order linear ODE
\begin{equation}\label{e0}
\dot{\tau}_n=a_n(s)\, \tau_n + b_n(s) \quad\hbox{ on } I,
\end{equation}
where
\begin{equation}\label{e9}
a_n(s)=\frac{\langle F(x_n(s))[\dot{x}_n(s)],\delta(x_n(s))\rangle}{\langle\delta(x_n(s)),\delta(x_n(s))\rangle},\qquad
b_n(s)=-\frac{\langle
D_{s}\dot{x}_n(s),\delta(x_n(s))\rangle}{\langle\delta(x_n(s)),\delta(x_n(s))\rangle}
\end{equation}
($\delta$ is non-vanishing, recall Proposition \ref{condelta}).
Since
\begin{equation}\label{reg}
\int_0^1\dot{t}_n \, \de s=t_q-t_p=\Delta_t\quad\hbox{for all
$n\geq n_0$,}
\end{equation}
necessarily
\be\label{f} \dot{t}_n(s_n)=\Delta_t\quad\hbox{for some $s_n\in I$.}
\ee
So, $\dot{t}_{n}(s)$ is the unique solution
to (\ref{e0}) which satisfies condition (\ref{f}), i.e.
\begin{equation}\label{e6}
\dot{t}_n(s)=\tau_{n}(s)=e^{A_n(s)}\left(g_n(s) +\Delta_t\right),
\end{equation}
where $A_n(s)$ is the primitive of $a_n(s)$ satisfying $A_n(s_n)=0$ and, for simplicity, we have put
\be\label{gienne}
g_n(s) \ =\ \int_{s_n}^{s} b_n(r) e^{-A_n(r)} \de r.
\ee
Now, in order to prove the boundedness of $(\|\dot{t}_n\|_2)_{n\geq n_0}$,
firstly, we claim that
\be\label{e20}
c_1\leq e^{A_n(s)}\leq c_2 \quad\hbox{ on $I$, for all $n\geq n_0$.}
\ee
In fact, by applying inequality \eqref{pi} to $x_n$, Lemma \ref{bounded} implies that the sequence
\be\label{e21}
\hbox{ $\left(\|x_n\|_\infty\right)_{n\geq n_0}$ is bounded,}
 \ee
thus
\be\label{suD}
c_3\le \langle\delta(x_n(s)),\delta(x_n(s))\rangle \le c_4
\quad\hbox{ on $I$, for all $n\geq n_0$.}
\ee
Then, by the Cauchy--Schwarz inequality, \eqref{e9}, \eqref{e21} and \eqref{suD} we obtain
\be\label{suA}
|a_n(s)|\leq c_5 |\dot x_n(s)| \quad \hbox{ on $I$,}
\ee
with $|\dot x_n(s)|^2 = \langle\dot x_n(s),\dot x_n(s)\rangle$.
Hence, Lemma \ref{bounded} implies
\[
|A_n(s)|\leq c_6 \quad\hbox{ on $I$, for all $n\geq n_0$,}
\]
which  implies \eqref{e20}.\\
So, in order to conclude the proof, from \eqref{e6} and \eqref{e20}
it suffices to show that
\be\label{fboun}
\left(\|{g}_n\|_2\right)_{n\ge n_0} \hbox{ is bounded. }
\ee
To this aim, let us note that
\[
\langle D_{s}\dot{x}_n,\delta(x_n)\rangle=-\langle
\dot{x}_n,\frac{\de}{\de s}\delta(x_n)\rangle + \frac{\de }{\de
s}\langle \dot{x}_n,\delta(x_n)\rangle,
\]
thus by \eqref{e9} and \eqref{gienne}, integrating by parts we have
\be\label{e8}\begin{split}
g_n(s) \ =\ &\int_{s_n}^s\langle \dot{x}_n,\frac{\de}{\de r}\delta(x_n)\rangle\
\frac{e^{-A_n(r)}}{\langle\delta(x_n),\delta(x_n)\rangle}\ \de r\\
&\ -\
\int_{s_n}^s\frac{\de}{\de r}\big(\langle \dot{x}_n,\delta(x_n)\rangle\big)\
\frac{e^{-A_n(r)}}{\langle\delta(x_n),\delta(x_n)\rangle}\ \de r\\
=\ &\int_{s_n}^s\langle \dot{x}_n,\frac{\de}{\de r}\delta(x_n)\rangle\
\frac{e^{-A_n(r)}}{\langle\delta(x_n),\delta(x_n)\rangle}\ \de r\\
&\ -\
\frac{e^{-A_n(s)}\langle \dot{x}_n(s),\delta(x_n(s))\rangle}{\langle\delta(x_n(s)),\delta(x_n(s))\rangle}
+ \frac{e^{-A_n(s_n)}\langle \dot{x}_n(s_n),\delta(x_n(s_n))\rangle}{\langle\delta(x_n(s_n)),\delta(x_n(s_n))\rangle}\\
&\ +\ \int_{s_n}^s \langle \dot{x}_n,\delta(x_n)\rangle\
\frac{\de}{\de r}\left(\frac{e^{-A_n(r)}}{\langle\delta(x_n),\delta(x_n)\rangle}\right)\ \de r.
\end{split}
\ee 
The smoothness of $\delta$, \eqref{e20}--\eqref{suA}, the
Cauchy--Schwarz inequality, direct computations and Lemma
\ref{bounded} imply that for all $n\geq n_0$ the following bounds hold:
\begin{equation}\label{e10}
\left|\int_{s_n}^s\langle \dot{x}_n,\frac{\de}{\de
r}\delta(x_n)\rangle\
\frac{e^{-A_n(r)}}{\langle\delta(x_n),\delta(x_n)\rangle}\ \de
r\right|\ \leq c_7 \|\dot{x}_n\|_2^2 \le c_8,
\end{equation}
\begin{equation}\label{e11}
\left|\ \frac{\langle
\dot{x}_n(s),\delta(x_n(s))\rangle}{\langle\delta(x_n(s)),\delta(x_n(s))\rangle}\right|
\ \leq\ c_{9}|\dot{x}_n(s)| \quad\hbox{ on } I,
\end{equation}
\begin{equation}\label{e12}
\begin{split}
&\left| \int_{s_n}^s \langle \dot{x}_n,\delta(x_n)\rangle\
\frac{\de}{\de r}\left(\frac{e^{-A_n(r)}}{\langle\delta(x_n),\delta(x_n)\rangle}\right)\ \de r\right|\\
&\qquad \le \ \int_0^1 |\langle \dot{x}_n,\delta(x_n)\rangle| \ \frac{|a_n(r)| e^{-A_n(r)}}{\langle\delta(x_n),\delta(x_n)\rangle}\ \de r \\
&\qquad\quad +\
2 \int_0^1 |\langle \dot{x}_n,\delta(x_n)\rangle| \ e^{-A_n(r)}\
\frac{\big|\langle \delta(x_n),\frac{\de}{\de r}\delta(x_n)\rangle\big| }{\langle\delta(x_n),\delta(x_n)\rangle^2}\ \de r \\
&\qquad \ \le\ c_{10} \|\dot{x}_n\|_2^2 \le c_{11}.
\end{split}
\end{equation}
Moreover, we claim that 
\begin{equation}\label{e66}
\left|\ \frac{\langle
\dot{x}_n(s_n),\delta(x_n(s_n))\rangle}{\langle\delta(x_n(s_n)),\delta(x_n(s_n))\rangle}\right|
\ \leq\ c_{12}\|\dot{x}_n\|_2\ \leq\ c_{13} \quad\hbox{ on } I,
\end{equation}
In fact, from the second equality in (\ref{metrican}) 
we have 
\[
\frac 1n\dot{t}_n - \<\delta(x_n),\dot{x}_n\> \equiv k_n\quad\hbox{ on } I;
\]
thus, from one hand \eqref{f} implies 
\[
k_n\ =\ \frac{1}{n}\dot{t}_n(s_n)
- \<\delta(x_n(s_n)),\dot{x}_n(s_n)\>\ =\ 
\frac{\Delta_t}{n}-\langle\delta(x_n(s_n)),\dot{x}_n(s_n)\rangle,
\]
while, from the other hand, \eqref{reg} gives
\[
k_n\ = \ \int_{0}^{1}\left(\frac 1n\dot{t}_n(s) -
\<\delta(x_n(s)),\dot{x}_n(s)\>\right)\de s
\ =\ \frac{\Delta_t}{n}-\int_{0}^{1}\langle\delta(x_n(s)),
\dot{x}_n(s)\rangle \de s.
\]
Whence,
\[
 \langle\delta(x_n(s_n)),\dot{x}_n(s_n)\rangle\ =\ \int_{0}^{1}\langle\delta(x_n(s)),\dot{x}_n(s)\rangle \de s
\]
and \eqref{e66} follows from \eqref{suD} and, again, Lemma
\ref{bounded}.\\
At last, by using \eqref{e10}--\eqref{e66} in \eqref{e8}, we
have that
\[
|g_n(s)|\ \leq\ c_{14} |\dot x_n(s)| + c_{15} \quad\hbox{ on $I$,
for all $n\geq n_0$;}
\]
whence, Lemma \ref{bounded} implies \eqref{fboun}.
\end{proof}

\begin{lemma}\label{strong} There exists $\gamma=(x,t)\in\Omega(x_p,x_q;S)\times W(t_p,t_q)$
such that, up to subsequences, $\left(\gamma_n\right)_{n\geq n_0}$
strongly converges to $\gamma$ on $\Omega(x_p,x_q;S)\times W(t_p,t_q)$.
\end{lemma}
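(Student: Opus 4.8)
The plan is to exploit the $H^1$--boundedness of $(\gamma_n)_{n\ge n_0}=(x_n,t_n)_{n\ge n_0}$ provided by Lemmas \ref{bounded} and \ref{boundedt}, extract a weakly convergent subsequence, and then upgrade weak to strong convergence component by component: for the spatial part via the minimality of $x_n$ for $\J_n$ (Remark \ref{minimi}), for the temporal part via the explicit representation \eqref{e6} of $\dot t_n$. Concretely, by Lemmas \ref{bounded}, \ref{boundedt} and \eqref{pi} the sequence $(x_n,t_n)$ is bounded in $H^1(I,\R^N)\times H^1(I,\R)$ (recall the closed isometric embedding $S\hookrightarrow\R^N$ of Section \ref{s4}); hence, along a subsequence, $x_n\rightharpoonup x$ in $H^1(I,\R^N)$ and $t_n\rightharpoonup t$ in $H^1(I,\R)$, both convergences being uniform by the Ascoli--Arzel\'a Theorem. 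Since $S$ is closed in $\R^N$ we get $x(s)\in S$ for all $s\in I$, while the endpoint values pass to the limit, so $\gamma=(x,t)\in\Omega(x_p,x_q;S)\times W(t_p,t_q)$.

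\emph{Strong convergence of $x_n$.} Put $I_n=\int_0^1\langle\delta(x_n),\dot x_n\rangle\,\de s$ and $I_\infty=\int_0^1\langle\delta(x),\dot x\rangle\,\de s$; from $x_n\to x$ uniformly and $\dot x_n\rightharpoonup\dot x$ in $L^2$ one has $I_n\to I_\infty$ and $\langle\delta(x_n),\dot x_n\rangle\rightharpoonup\langle\delta(x),\dot x\rangle$ in $L^2$. The key point is that the second equation in \eqref{metrican} yields $\langle\delta(x_n),\dot x_n\rangle=\frac1n\dot t_n-k_n$ for a constant $k_n$, so \eqref{reg} gives
\[
\int_0^1\langle\delta(x_n),\dot x_n\rangle^2\,\de s-I_n^2=\frac1{n^2}\bigl(\|\dot t_n\|_2^2-\Delta_t^2\bigr)\ \longrightarrow\ 0
\]
by Lemma \ref{boundedt}; hence $\int_0^1\langle\delta(x_n),\dot x_n\rangle^2\,\de s\to I_\infty^2$. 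Weak lower semicontinuity of the $L^2$--norm forces $\int_0^1\langle\delta(x),\dot x\rangle^2\,\de s\le I_\infty^2$, and the Cauchy--Schwarz inequality on $I$ gives the reverse estimate; thus $\langle\delta(x),\dot x\rangle\equiv I_\infty$ is a.e.\ constant, so that the variance bracket in \eqref{Jn} evaluated at $x$ vanishes and $\J_n(x)\to\frac12\|\dot x\|_2^2+\Delta_t I_\infty$. Since $x_n$ minimizes $\J_n$ (Remark \ref{minimi}), $\J_n(x_n)\le\J_n(x)$; on the other hand \eqref{Jn}, the displayed identity and $I_n\to I_\infty$ give $\J_n(x_n)=\frac12\|\dot x_n\|_2^2+\Delta_t I_n+o(1)$. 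Combining these with $\frac12\|\dot x\|_2^2\le\liminf\frac12\|\dot x_n\|_2^2$ forces $\|\dot x_n\|_2\to\|\dot x\|_2$; as $\dot x_n\rightharpoonup\dot x$ in the Hilbert space $L^2(I,\R^N)$, this upgrades to $\dot x_n\to\dot x$ strongly in $L^2$, hence $x_n\to x$ strongly in $\Omega(x_p,x_q;S)$.

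\emph{Strong convergence of $t_n$.} Passing to a further subsequence, let $s_n\to s_\infty\in I$ (recall \eqref{f}). By the previous step $\dot x_n\to\dot x$ in $L^2$ and $x_n\to x$ uniformly; since $\delta$ is smooth, the functions $a_n$ in \eqref{e9} converge in $L^1$ to the corresponding limit built from $x$, so $A_n\to A$ uniformly on $I$. Rewriting $g_n$ through the integration by parts \eqref{e8} (so that only $\dot x_n$, and no second derivatives, appear) and using the bounds \eqref{e20}--\eqref{e66}, one checks that $g_n\to g$ a.e.\ on $I$, with $g$ the analogous expression in $x$; hence, by \eqref{e6}, $\dot t_n\to\rho:=e^{A}(g+\Delta_t)$ a.e.\ on $I$. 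The pointwise bound $|\dot t_n(s)|\le C\bigl(1+|\dot x_n(s)|\bigr)$, with $C$ independent of $n$ and $s$, obtained in the proof of Lemma \ref{boundedt}, together with $|\dot x_n|^2\to|\dot x|^2$ in $L^1$ (a consequence of $\dot x_n\to\dot x$ in $L^2$), supplies a variable dominating function with convergent integrals, so a standard generalized dominated convergence argument yields $\dot t_n\to\rho$ strongly in $L^2(I,\R)$; since $\dot t_n\rightharpoonup\dot t$ in $L^2$, necessarily $\rho=\dot t$. As the argument applies to every subsequence, the whole sequence converges, so $t_n\to t$ strongly in $W(t_p,t_q)$, which together with the previous step proves the statement. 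The genuine obstacle is precisely this last step: the temporal component escapes soft variational control, because $\dot t_n=n\bigl(\langle\delta(x_n),\dot x_n\rangle-I_n\bigr)+\Delta_t$ is an indeterminate ``$\infty\cdot 0$'' quantity whose $L^2$--limit can only be identified through the explicit ODE solution \eqref{e6} and the integration by parts \eqref{e8}, i.e.\ by exploiting the full strength of the estimates of Lemma \ref{boundedt}; a minor but essential ingredient is the observation that the weak limit $x$ is automatically admissible (constant $\langle\delta(x),\dot x\rangle$), which is what legitimizes the comparison $\J_n(x_n)\le\J_n(x)$.
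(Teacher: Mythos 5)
Your proof is correct, but it follows a genuinely different route from the paper's. The paper proves strong convergence of both components at once by testing the criticality identity $\de f_n(\gamma_n)[\zeta]=0$ with the ``difference'' field $\zeta=(\xi_n,-\tau_n)$, where $x_n-x=\xi_n+\nu_n$ is the decomposition of \cite[Lemma 2.1]{bf} and $\tau_n=t_n-t$, and then defers the conclusion to the scheme of \cite[Theorem 3.3]{gm}. You instead decouple the two components. For the spatial part you use the minimality of $x_n$ for $\J_n$ (Remark \ref{minimi}) directly: the second equation in \eqref{metrican} together with \eqref{reg} gives the exact identity $\int_0^1\langle\delta(x_n),\dot x_n\rangle^2\de s-I_n^2=\tfrac1{n^2}(\|\dot t_n\|_2^2-\Delta_t^2)$, so the ``variance'' term in $\J_n(x_n)$ is $o(1)$ by Lemma \ref{boundedt}; and the combination of weak lower semicontinuity with the equality case of the Cauchy--Schwarz inequality shows that the weak limit $x$ has $\langle\delta(x),\dot x\rangle$ a.e.\ constant, so that $\J_n(x)$ stays bounded and the comparison $\J_n(x_n)\le\J_n(x)$ forces $\|\dot x_n\|_2\to\|\dot x\|_2$, hence strong $L^2$ convergence of $\dot x_n$. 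For the temporal part you reuse the explicit ODE representation \eqref{e6}--\eqref{e8} and the bounds of Lemma \ref{boundedt} to get a.e.\ convergence of $\dot t_n$ dominated by $C(1+|\dot x_n|)$, and conclude by generalized dominated convergence. This buys a self-contained argument that does not defer to \cite{gm}, and it honestly confronts the point the paper glosses over: since $\beta_n\equiv 1/n\to 0$, the relation $\dot t_n=n(\langle\delta(x_n),\dot x_n\rangle-k_n)+{\rm const}$ is an indeterminate form and the strong convergence of $t_n$ really does require the ODE machinery rather than a soft weak-convergence argument. As a bonus, your spatial step already shows that the limit satisfies the second equation of \eqref{metricaluce}, which the paper re-derives separately in the proof of Theorem \ref{tm}. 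Two cosmetic remarks: the comparison $\J_n(x_n)\le\J_n(x)$ is legitimate for \emph{any} $x\in\Omega(x_p,x_q;S)$ (the constancy of $\langle\delta(x),\dot x\rangle$ is what makes it \emph{useful}, not what makes it licit); and the closing claim that ``the whole sequence converges'' is unneeded, since the lemma only asserts subsequential convergence and you have already extracted subsequences (for the weak limits, for $s_n\to s_\infty$ in \eqref{f}, and for a.e.\ convergence of $\dot x_n$).
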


\begin{proof}
From \eqref{pi} and Lemmas \ref{bounded}, \ref{boundedt}, the
sequences $\left(\|x_n\|\right)_{n\geq n_0}$ and
$\left(\|t_n\|\right)_{n\geq n_0}$ are bounded, thus there exists
$\gamma=(x,t)\in H^1(I,\R^N)\times H^1(I,\R)$ such that, up
to subsequences, \be\label{xdeb} x_n \rightharpoonup x \quad\hbox{
weakly in $H^1(I,\R^N)$ (and also uniformly in $I$)} \ee and
\[
t_n \rightharpoonup t \quad\hbox{weakly in }  H^1(I,\R).
\]
Furthermore, as $S$ is complete, by \eqref{xdeb} it follows that
$x\in\Omega(x_p,x_q;S)$ and there exist two sequences
$(\xi_n)_{n\geq n_0}$, $(\nu_n)_{n\geq n_0}$ in $H^1(I,\R^N)$ such
that
\be\label{weak}
\begin{split}
&\xi_n\in T_{x_n}\Omega (x_p,x_q;S), \quad x_n-x=\xi_n + \nu_n \quad\hbox{ for all } n\geq n_0,\\
&\xi_n\rightharpoonup 0 \quad\hbox{weakly} \quad\hbox{ and } \quad
\nu_n\rightarrow 0 \quad\hbox{strongly in } H^1(I,\R^N)
\end{split}
\ee
(cf. \cite[Lemma 2.1]{bf}).
Taking any $n\geq n_0$, by Proposition \ref{p1} and
\eqref{statn} we have $\de f_n(\gamma_n)[\zeta]=0$ for all $\zeta\in
T_{\gamma_n}\Omega_n(p,q)$, thus in particular \be\label{crit}
\begin{split}
&\int_0^1\langle\dot x_n,\dot \xi_n\rangle \, \de s
+\int_0^1\langle\delta'(x_n)\xi_n,\dot x_n\rangle\,\dot t_n\, \de s
+\int_0^1 \langle\delta(x_n),\dot \xi_n\rangle\,\dot t_n \, \de s\\
&\qquad - \int_0^1 \langle\delta(x_n),\dot x_n\>\,\dot \tau_n \, \de s + \int_0^1\frac 1n\dot t_n\dot\tau_n\, \de s\ =\ 0
\end{split}
\ee
for $\zeta = (\xi_n,-\tau_n)\in T_{\gamma_n}\Omega_n(p,q)$ with
$\tau_n = t_n - t \in H^1_0(I,\R)$. On the other hand, by Lemmas
\ref{bounded}, \ref{boundedt}  and \eqref{weak}, it results
\[
\int_0^1\langle\delta'(x_n)\xi_n,\dot x_n\rangle\,\dot t_n\, \de s\ =\ o(1),
\]
where $o(1)$ denotes an infinitesimal sequence. Whence,
\eqref{crit} implies
\[
\begin{split}
&\int_0^1\langle\dot x_n,\dot \xi_n\rangle \, \de s +
\int_0^1\frac 1n\dot t_n\dot\tau_n\, \de s \\
&\qquad = - \int_0^1 \langle\delta(x_n),\dot \xi_n\rangle\,\dot t_n \, \de s
+ \int_0^1 \langle\delta(x_n),\dot x_n\>\,\dot \tau_n \, \de s + o(1).
\end{split}
\]
Reasoning as in \cite[Theorem 3.3]{gm}, the strong convergence of
$(\gamma_n)_{n\geq n_0}$ to $\gamma$, up to a subsequence, is
deduced.
\end{proof}

\begin{proof}[Proof of Theorem \ref{tm}]
The implication $(i) \Longrightarrow (ii)$ is a direct consequence
of \eqref{vinc}.\\
For the implication $(ii) \Longrightarrow (i)$, let
$\left(\gamma_n=(x_n,t_n)\right)_{n\geq n_0}$ be the sequence of curves
connecting $p$ to $q$, with each $\gamma_n$ geodesic in $\elle_n$, provided by
Proposition \ref{p1}. From Lemma \ref{strong} there exists a curve
$\gamma=(x,t)\in\Omega(x_p,x_q;S)\times W(t_p,t_q)$ such that, up
to subsequences, \be\label{strongly} x_n \to x \;\hbox{ strongly
in $\Omega(x_p,x_q;S)$}\quad\hbox{ and }\quad t_n \to t \;\hbox{
strongly in $W(t_p,t_q)$.} \ee It suffices to prove that $\gamma$
satisfies equations \eqref{equa} with $\beta\equiv 0$, i.e.,
\begin{equation}\label{metricaluce}
\left\{ \begin{array}{l}
 D_s\dot x - \dot t\,F(x)[\dot x] + \ddot t\,\delta(x) = 0, \\
{\displaystyle \frac {\de}{\de s}\left( \<\delta(x),\dot x\>\right) = 0.}
      \end{array}
      \right.
      \end{equation}
To this aim, let us remark that if $n \ge n_0$, by Theorem \ref{t1} applied to $f_n$
in \eqref{statn}, we have \be\label{da}
\de f_n(\gamma_n)[\zeta]=0\quad\hbox{ for all } \zeta\in
T_{\gamma_n}\Omega(p,q). \ee
Then in particular, taking any $\tau \in
H^1_0(I,\R)$ and $\zeta = (0,\tau)$ in \eqref{da}, it follows that
\[
\int_0^1\langle\delta(x_n),\dot x_n\rangle\,\dot\tau\,\de s - \frac 1n\int_0^1\dot t_n\dot\tau\,\de s\ =\ 0;
\]
hence, passing to the limit, by \eqref{strongly} we get
\[
\int_0^1\langle\delta(x),\dot x\rangle\dot\tau\,\de s=0.
\]
Thus, for the arbitrariness of $\tau\in H^1_0(I,\R)$
 the second equality in \eqref{metricaluce} holds.\\
On the other hand, taking any $\eta\in T_x\Omega (x_p,x_q;S)$, by
\eqref{strongly} and \cite[Lemma 2.2]{bf} there exists a sequence
$(\eta_n)_{n\geq n_0}$, with $\eta_n\in T_{x_n}\Omega (x_p,x_q;S)$,
converging weakly to $\eta$. Then, choosing $\zeta=(\eta_n,0)$ in \eqref{da}
for $n \ge n_0$, by passing to the limit
and taking into account \eqref{strongly}, we obtain
\[
\int_0^1\langle\dot x,\dot \eta\rangle\,\de s + \int_0^1\langle\delta'(x)\eta,\dot x\rangle\dot t\,\de s +
\int_0^1\langle\delta(x),\dot \eta\rangle\dot t\,\de s\ =\ 0 .
\]
Therefore, integrating by parts and for the arbitrariness of $\eta\in T_x\Omega (x_p,x_q;S)$,
we deduce that $\gamma=(x,t)$ is smooth
and verifies the first equation in \eqref{metricaluce}. Hence, the proof is complete.
\end{proof}

The proof of Theorem \ref{tm} requires global hyperbolicity only
in two points: for ensuring the decomposition \eqref{metrica1} and
for proving the following property:
\smallskip

\noindent {\em (*) $\quad$ Any past inextendible causal curve
departing from $q=(x_q,t_q)$, $t_q\geq t_p$, must intersect
$S\times \{t_p\}$.}

\smallskip

\noindent Therefore, if we are dealing with a spacetime which
already splits globally as in \eqref{metrica1}, the global
hyperbolicity assumption can be replaced by property {\em (*)}. More
precisely, the same arguments performed in the proof of Theorem \ref{tm}
allow us to state the following generalization:

\begin{theorem}\label{nuovo'}
Let $({\mathcal L},\<\cdot,\cdot\>_{L})$ be a spacetime with
$\elle=S\times\R$ and $\langle\cdot,\cdot\rangle_{L}$ as in
(\ref{metrica1}). Assume that $(S,\langle\cdot,\cdot\rangle)$ is a
complete Riemannian manifold. Given two points $p=(x_p,t_p)$,
$q=(x_q,t_q)$, with $\Delta_t= t_q- t_p\geq 0$, satisfying
property (*), the following statements are equivalent:
\begin{itemize}
\item[$(i)$] $p$ and $q$ are geodesically connected in $\elle$;
\item[$(ii)$] $p$ and $q$ can be connected by a $C^1$ curve
$\varphi=(y,t)$ on $\elle$ such that $\langle \delta(y),
\dot{y}\rangle$ has constant sign or is identically equal to $0$.
\end{itemize}
\end{theorem}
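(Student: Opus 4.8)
The plan is to rerun the proof of Theorem \ref{tm} essentially word for word. Once the global splitting \eqref{metrica1} is assumed rather than derived from global hyperbolicity via Proposition \ref{condelta}, an inspection of that proof (and of the auxiliary results it invokes) shows that global hyperbolicity is used in exactly one further place: the last step of the proof of Proposition \ref{p1}, where the Cauchy character of $S\times\{t_p\}$ in $(\elle,\langle\cdot,\cdot\rangle_L)$ is invoked. Everything else --- Theorem \ref{nuovo}, Lemmas \ref{bounded}, \ref{boundedt} and \ref{strong}, Remark \ref{minimi}, and the concluding limit argument of the proof of Theorem \ref{tm} --- uses only the metric form \eqref{metrica1}, the completeness of $(S,\langle\cdot,\cdot\rangle)$, the sign hypothesis supplied by $(ii)$, and the conclusion of Proposition \ref{p1}, none of which sees global hyperbolicity directly.

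Hence the first task is to re-establish the conclusion of Proposition \ref{p1} from property (*): to produce $n_0\in\N$ such that for every $n\ge n_0$ the auxiliary standard stationary spacetime $\elle_n$ of \eqref{stati} has $J_n^-(q)\cap(S\times\{t_p\})$ compact, so that Theorem \ref{nuovo} --- whose remaining hypotheses, completeness of $S$ and $\Delta_t\ge0$, are in force --- yields a geodesic $\gamma_n=(x_n,t_n)$ in $\elle_n$ joining $p$ to $q$. When $\Delta_t=0$ this is immediate, since then $J_n^-(q)\cap(S\times\{t_p\})=\{q\}$: the $t$--coordinate is strictly monotone along any $\langle\cdot,\cdot\rangle_n$--causal curve. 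When $\Delta_t>0$ I would argue by contradiction exactly as in Proposition \ref{p1}: if $J_n^-(q)\cap(S\times\{t_p\})$ is not compact for infinitely many indices, then the Hopf--Rinow theorem, the closedness statement of Proposition \ref{p}(i) and a Cantor diagonal argument give an unbounded sequence $(y_m)_m$ with $y_m\in J_m^-(q)\cap(S\times\{t_p\})$, past inextendible $\langle\cdot,\cdot\rangle_m$--causal curves $\gamma_m$ departing from $q$ and passing through $y_m$, and, via \cite[Proposition 3.31]{bee} (applied first for a fixed $n_0$ and then letting $n\to\infty$), a limit curve $\gamma$ that departs from $q$, is past inextendible, and is $\langle\cdot,\cdot\rangle_n$--causal for all $n$, hence $\langle\cdot,\cdot\rangle_L$--causal. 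Since the $y_m$ leave every compact subset of $\elle$ and the $t$--coordinate is strictly monotone along $\langle\cdot,\cdot\rangle_L$--causal curves (here $\beta\equiv0$), the $t$--coordinate of $\gamma$ must stay strictly above $t_p$, so $\gamma$ does not meet $S\times\{t_p\}$; on the other hand $\gamma$ is a past inextendible $\langle\cdot,\cdot\rangle_L$--causal curve departing from $q=(x_q,t_q)$ with $t_q\ge t_p$, so property (*) forces it to meet $S\times\{t_p\}$ --- a contradiction. This is exactly the place where, in Proposition \ref{p1}, the Cauchy character of $S\times\{t_p\}$ is used.

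With the sequence $(\gamma_n)_{n\ge n_0}$ in hand, the implication $(ii)\Longrightarrow(i)$ follows by copying the proof of Theorem \ref{tm}: Lemmas \ref{bounded} and \ref{boundedt} give uniform bounds on $\|\dot x_n\|_2$ and $\|\dot t_n\|_2$; Lemma \ref{strong} extracts a subsequence $\gamma_n\to\gamma=(x,t)$ converging strongly in $\Omega(x_p,x_q;S)\times W(t_p,t_q)$; and passing to the limit in the criticality conditions $\de f_n(\gamma_n)=0$ shows that $\gamma$ solves \eqref{metricaluce}, i.e. is a geodesic of $(\elle,\langle\cdot,\cdot\rangle_L)$ joining $p$ to $q$. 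The reverse implication $(i)\Longrightarrow(ii)$ is immediate: for a connecting geodesic $\gamma=(y,t)$ one has, by \eqref{vinc} and \eqref{dopo} with $\beta\equiv0$, $\langle\delta(y),\dot y\rangle=\langle\dot\gamma,K(\gamma)\rangle_L\equiv C_\gamma\in\R$, which has constant sign or is identically $0$.

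The only genuinely delicate point is the bookkeeping in this re-derivation of the conclusion of Proposition \ref{p1}, and within it the verification that the limit curve $\gamma$ produced there is past inextendible and departs from $q$, so that property (*) --- tailored precisely to this configuration --- applies to it. The remaining ingredients (the escape of the $y_m$ to infinity, the strict monotonicity of the $t$--coordinate along causal curves when $\beta\equiv0$, and the transcription of Lemmas \ref{bounded}--\ref{strong} and of the concluding argument in the proof of Theorem \ref{tm}) are routine and insensitive to the replacement of global hyperbolicity by property (*).
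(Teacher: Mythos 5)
Your proposal is correct and follows essentially the same route as the paper itself, which proves Theorem \ref{nuovo'} precisely by observing that global hyperbolicity enters the proof of Theorem \ref{tm} only through the splitting \eqref{metrica1} (now assumed) and through the Cauchy character of $S\times\{t_p\}$ invoked at the end of the proof of Proposition \ref{p1}, which is exactly what property (*) replaces. Your extra bookkeeping (the $\Delta_t=0$ case and the check that the limit curve is past inextendible, departs from $q$, and misses $S\times\{t_p\}$) merely fleshes out details the paper leaves implicit.
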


\section{Accuracy of the hypotheses of Theorem \ref{tm}.}\label{s8}

\noindent (a) {\em Counterexample if the lightlike Killing vector
field is not complete.}

\smallskip

\noindent Consider the spacetime obtained by removing from the
Minkowski $2$--space ${\mathbb L}^{2}$ the region $\{(x,t): x\geq
0,\, t\geq 0\}$. This spacetime admits the hyperplane $t\equiv -1$
as a complete Cauchy hypersurface, and $K= \partial_x +
\partial_t$ as a non--complete lightlike Killing vector field.
However, the points $p=(1,-1)$, $q=(-1,1)$, which can be connected
with a $C^1$ curve $\varphi$ with
$\langle\dot{\varphi},K(\varphi)\rangle_L$ having constant
negative sign, cannot be connected by a geodesic.

\bigskip

\noindent (b) {\em Counterexample if the Cauchy hypersurface is
not complete.}

\smallskip

\noindent Consider $\elle=S\times\R$, $S=\R^2\setminus\{(x_1,0):
-1\leq x_1\leq 1\}$ equipped with the Lorentzian metric
\[
\langle\zeta,\zeta'\rangle_L=\langle\xi,\xi'\rangle_0+\langle\delta(x),\xi\rangle_0\,
\tau' + \langle\delta(x),\xi'\rangle_0\, \tau,
\]
for all $\zeta = (\xi,\tau), \zeta' = (\xi',\tau')\in \R^3$, where
$\langle\cdot,\cdot\rangle_0$ is the canonical scalar product on
$S\subset\R^2$ and $\delta: x=(x_1,x_2)\in S \mapsto
\lambda(x)(1,0)\in \R^2$, with $\lambda$ a positive smooth
function on $S$ such that $\langle\cdot,\cdot\rangle_0/\lambda^2$
is complete on $S$. Note that $K=\partial_t$ is a complete
lightlike Killing vector field and $S\times\{t\}$ is a
non--complete Cauchy hypersurface for every $t\in\R$ (apply
\cite[Proposition 3.1]{s} with $F_n(x)\equiv 2 \lambda(x)$ for all
$n$). However, this manifold is not geodesically connected. In
fact, consider two points $p=(x_p,0)$, $q=(x_q,0)$ with
$x_p=(0,-1)$, $x_q=(0,1)$. By the second equation in
\eqref{metricaluce}, any geodesic $\gamma=(x,t)$ joining $p$
to $q$ must satisfy
\[
\frac{\de}{\de s}\langle\delta(x),\dot{x}\rangle_0=0,
\]
but the sign of $\langle\delta(x),\dot{x}\rangle_0$ must
change for any curve $x=x(s)$ departing from $x_p$ and arriving to
$x_q$. Hence, there are no geodesics connecting $p$ to $q$.

\bigskip

\noindent (c) {\em The existence of a complete lightlike Killing
vector field and a complete Cauchy hypersurface do not imply
geodesic conectedness.}

\smallskip

\noindent Consider $\elle=\R^3\times\R$ equipped with the
Lorentzian metric
\[
\langle\zeta,\zeta'\rangle_L=\langle\xi,\xi'\rangle_0+\langle\delta(x),\xi\rangle_0\,
\tau' + \langle\delta(x),\xi'\rangle_0\, \tau,
\]
for all $\zeta = (\xi,\tau), \zeta' = (\xi',\tau')\in \R^4$, where
$\langle\cdot,\cdot\rangle_0$ is the canonical scalar product on
$\R^3$ and $\delta: x=(x_1,x_2,x_3)\in \R^3 \mapsto \delta(x_1)\in
\R^3$ satisfies
\[
\delta(x_1)=\left\{\begin{array}{ll} (-\cos^3 x_1,0,0) & \hbox{if
$x_1<\pi$} \\ (1,0,0) & \hbox{if $x_1\geq\pi$.}\end{array}\right.
\]
In this spacetime $\partial_t$ is a complete lightlike Killing
vector field and $\R^{3}\times\{t\}$ is a complete Cauchy
hypersurface for every $t\in\R$ (apply \cite[Proposition 3.1]{s}
with $F_n\equiv 2$ for all $n$). However, this spacetime is not
geodesically connected. In fact, for any curve $x=x(s)$
departing from a point in $\R^3$ with $x_1=0$ and arriving to a
point in the region $x_1>\pi$, the sign of
$\langle\delta(x),\dot{x}\rangle_0$ must change. Hence, reasoning as in the
previous item, there is no geodesic which connects the points $p=(x_p,0)$ and
$q=(x_q,0)$, where, for example, it is $x_p=(0,0,0)$ and $x_q=(3\pi/2,0,0)$.

\section{Some Applications}\label{s7}

\subsection{Avez--Seifert result.} A first consequence of Theorem \ref{tm} is that it provides the
classical Avez--Seifert result (cf., e.g., \cite[Theorem
3.18]{bee}) in our ambient:
\begin{proposition}\label{pp} Let $(\elle,\<\cdot,\cdot\>_L)$ be a globally hyperbolic spacetime
endowed with a complete lightlike Killing vector field $K$ and a
complete Cauchy hypersurface $S$. Then, two points of $\elle$ can
be connected by a causal geodesic if and only if they are causally
related.
\end{proposition}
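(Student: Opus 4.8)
The plan is the following. The forward implication is immediate: a causal geodesic joining $p$ and $q$ is in particular a causal curve between them, so $p$ and $q$ are causally related. For the converse, assume $p$ and $q$ are causally related; the case $p=q$ is trivial, so let $p\ne q$. By Proposition \ref{condelta} one works in the splitting $\elle=S\times\R$ with $K=\partial_t$ and metric \eqref{metrica1}, so that $\langle\dot\varphi,K(\varphi)\rangle_L=\langle\delta(y),\dot y\rangle$ for any curve $\varphi=(y,\ell)$ (recall \eqref{dopo} with $\beta\equiv 0$). Since the slices $S\times\{t\}$ are spacelike (indeed $\langle(\xi,0),(\xi,0)\rangle_L=\langle\xi,\xi\rangle\ge 0$), along any causal curve between $p$ and $q$ the component $\dot\ell$ is nowhere zero and of constant sign, hence $t_p\ne t_q$; relabelling $p$ and $q$ if necessary, I would assume $\Delta_t=t_q-t_p>0$.

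Next I would feed Theorem \ref{tm} an appropriate connecting curve. If $p$ and $q$ are not chronologically related (neither lies in the chronological future of the other), then they lie on each other's light cone, and classical causality theory (see, e.g., \cite{o}) already provides a lightlike geodesic between them: nothing more is needed. Otherwise, say $q\in I^+(p)$ (the other case being symmetric), and pick a timelike $C^1$ curve $\varphi=(y,\ell)$ between $p$ and $q$. Timelikeness forces $\dot y\ne 0$ everywhere, hence (using $\dot\ell>0$) $\langle\delta(y),\dot y\rangle<-|\dot y|^2/(2\dot\ell)<0$ throughout; in particular $\langle\dot\varphi,K(\varphi)\rangle_L$ has strictly constant sign, so Theorem \ref{tm} yields a geodesic $\gamma=(x,t)$ from $p$ to $q$. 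It then remains to check that $\gamma$ is causal.

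For this I would look inside the proof of Theorem \ref{tm}: there $\gamma$ is the strong $H^1$-limit (Lemma \ref{strong}) of the geodesics $\gamma_n=(x_n,t_n)$ of the auxiliary stationary spacetimes $\elle_n$, whose spatial components $x_n$ \emph{minimise} the functionals $\J_n$ of \eqref{Jn} (Remark \ref{minimi}). Hence, by conservation of the energy, $\langle\dot\gamma_n,\dot\gamma_n\rangle_n=2\J_n(x_n)=2\min\J_n\le 2\J_n(\tilde y)$, where $\tilde y$ is the reparametrisation of $y$ introduced in Lemma \ref{bounded} (legitimate precisely because, $\varphi$ being timelike, $\langle\delta(y),\dot y\rangle$ never vanishes), for which $\langle\delta(\tilde y),\dot{\tilde y}\rangle\equiv I_\delta:=\int_0^1\langle\delta(y),\dot y\rangle\,\de s<0$. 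With this choice the bracket in \eqref{Jn} vanishes, so $\J_n(\tilde y)=\tfrac12\|\dot{\tilde y}\|_2^2+\Delta_t\,I_\delta-\Delta_t^2/(2n)$, while the pointwise causal inequality for $\varphi$, written as $|\dot y|^2/|\langle\delta(y),\dot y\rangle|\le 2\dot\ell$ and integrated after the reparametrisation, gives $\tfrac12\|\dot{\tilde y}\|_2^2\le-\Delta_t\,I_\delta$. Therefore $\J_n(\tilde y)\le-\Delta_t^2/(2n)<0$, so $\langle\dot\gamma_n,\dot\gamma_n\rangle_n<0$ for every $n$; passing to the limit, using the strong convergence of Lemma \ref{strong} and discarding the term $\tfrac1n\|\dot t_n\|_2^2\to 0$ by Lemma \ref{boundedt}, one gets $\langle\dot\gamma,\dot\gamma\rangle_L=\lim_n\langle\dot\gamma_n,\dot\gamma_n\rangle_n\le 0$. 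Thus $\gamma$ is a causal geodesic from $p$ to $q$.

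The hard part will be this last paragraph: deriving the uniform bound $\J_n(\tilde y)\le 0$ from the causal character of $\varphi$, and transferring the sign of $\langle\dot\gamma_n,\dot\gamma_n\rangle_n$ to the limit metric $\langle\cdot,\cdot\rangle_L$, which is possible only because $\langle\cdot,\cdot\rangle_n$ differs from $\langle\cdot,\cdot\rangle_L$ merely by $-\tfrac1n\tau^2$, a term controlled along $\gamma_n$ by Lemma \ref{boundedt}. Everything else (the reduction via the splitting \eqref{metrica1}, the choice of a timelike connecting curve, the appeal to classical causality theory on the light cone, and the application of Theorem \ref{tm}) is routine.
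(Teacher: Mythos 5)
Your proof is correct and its core coincides with the paper's: obtain a connecting geodesic $\gamma$ from Theorem \ref{tm}, then exploit the fact that each spatial component $x_n$ minimises $\J_n$ (Remark \ref{minimi}) to compare $2\J_n(x_n)=\langle\dot\gamma_n,\dot\gamma_n\rangle_n$ with the value of $\J_n$ on (a reparametrisation of) the given connecting causal curve, and pass to the limit via Lemmas \ref{boundedt} and \ref{strong} to get $f(\gamma)\le 0$. The only real divergence is at the start: the paper treats all causally related pairs uniformly, reparametrising an arbitrary $C^1$ causal curve $\varphi$ so that $\langle\dot\varphi,K(\varphi)\rangle_L$ is constant and then simply writing $\J_n(x_n)\le\J_n(y)=f_n(\varphi)\to f(\varphi)\le 0$; you instead split off the horismos case ($q\in J^+(p)\setminus I^+(p)$), dispatching it by the classical fact that such points are joined by a null geodesic, and in the chronological case work with a strictly timelike $\varphi$, for which $\langle\delta(y),\dot y\rangle$ is nowhere zero and the reparametrisation of Lemma \ref{bounded} is unproblematic. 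Your variant costs an appeal to an external causality result but buys a cleaner justification of the reparametrisation step (the paper's causal $\varphi$ may have $\langle\dot\varphi,K(\varphi)\rangle_L$ vanishing at points where $\dot y=0$ without vanishing identically, so making it constant is slightly delicate there); your explicit estimate $\J_n(\tilde y)\le -\Delta_t^2/(2n)$ is just the unpacked form of the paper's inequality $f_n(\varphi)\le f(\varphi)\le 0$. Both arguments are sound.
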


\begin{proof}
 We will focus on the implication to the left, as the
converse is trivial. So, assume that two points $p,q\in \elle$ are
causally related. Then, they are connectable by a $C^1$ causal
curve $\varphi=(y,\tau)$, which, up to a reparameterization,
satisfies that $\langle\dot\varphi, K(\varphi)\rangle_L$ is
constant. Thus, from Theorem \ref{tm} the points $p$ and $q$ are
connectable by a geodesic
$\gamma=(x,t)$.\\
In order to prove that $\gamma=(x,t)$ is causal, it suffices to
show that $f(\gamma)\leq 0$. To this aim, recall that
$\gamma=(x,t)$ can be approached by a sequence of geodesics
$\gamma_n=(x_n,t_n)$, $n\geq n_0$, of
$(\elle_n,\<\cdot,\cdot\>_n)$, where each $x_n$ is a minimum of the
functional $\J_n$ (recall  Remark \ref{minimi} and Lemma
\ref{strong}). So, from one hand,
$\gamma_n \to \gamma$ strongly in $\Omega(x_p,x_q;S)\times W(t_p,t_q)$
 (and also uniformly in $I$) and the boundedness of
$\left(\|\dot{x}_n\|_2\right)_{n\geq n_0}$ and
$\left(\|\dot{t}_n\|_2\right)_{n\geq n_0}$ imply
\[
\J_n(x_n)=f_n(\gamma_n)\rightarrow f(\gamma)\quad\hbox{as
$n\rightarrow\infty$}
\]
(cf. also \cite[Theorem 3.3]{gm}). On the other hand,
\[
\J_n(x_n)\leq \J_n(y)=f_n(\varphi)\rightarrow f(\varphi)\leq 0
\quad\hbox{as $n\rightarrow\infty$.}
\]
In conclusion, $f(\gamma)\leq 0$ and, thus, $\gamma$ is causal.
\end{proof}

\subsection{Generalized plane waves.} Theorem \ref{tm}
becomes also useful for studying the geodesic connectedness of a family of
Lorentzian manifolds which generalizes the gravitational waves,
the so--called generalized plane waves (see \cite{MTW}).

\begin{definition}\label{pfwave}
{\rm A Lorentzian manifold $(\elle,\<\cdot,\cdot\>_L)$ is called
\emph{generalized plane wave}, briefly \emph{GPW}, if there exists
a (connected) finite dimensional Riemannian manifold
$(\M,\<\cdot,\cdot\>)$ such that $\elle = \M \times \R^{2}$ and
\[
\<\cdot,\cdot\>_L\ =\ \<\cdot,\cdot\> + 2 du dv+ \h (x,u) du^{2},
\]
where $x\in \M$, the variables $(u,v)$ are the natural coordinates
of $\R^{2}$ and the smooth function $\h: \M\times \R\rightarrow
\R$ is not identically zero.}
\end{definition}
A GPW becomes a gravitational wave if $\M = \R^2$ is equipped with
the classical Euclidean metric and $\h(x,u)\ =\ g_1(u) (x_1^2 -
x_2^2) + 2 g_2(u) x_1x_2$, $x = (x_1,x_2)\in \R^2$, for some
smooth real functions $g_1$ and $g_2$ such that $g_1^2 + g_2^2
\not\equiv 0$ (for more details, cf., e.g., \cite{bee}).

The geodesic connectedness and the global hyperbolicity of GPWs
have been investigated in \cite{cfs1, FS}. In particular, if the
Riemannian manifold $(\M,\<\cdot,\cdot\>)$ is complete with
respect to its canonical distance $d(\cdot,\cdot)$ and $\h$
behaves subquadratically at spatial infinity, i.e., there exist
$\bar{x} \in \M$ and (positive) continuous functions $R_1(u)$,
$R_2(u)$, $p(u)$, with $p(u) < 2$, such that
\[
- \h(x, u) \le R_1(u) d^{p(u)}(x,\bar{x}) + R_2(u) \quad \hbox{for
all $(x,u) \in \M\times\R$,}
\]
then the spacetime is not only geodesically connected (cf.
\cite[Corollary 4.5]{cfs1}) but also globally hyperbolic (cf.
\cite[Theorem 4.1]{FS}). This suggests an intrinsic connection
between these two properties, as the following simple consequence
of our approach confirms:

\begin{theorem}\label{c0}
Any globally hyperbolic GPW with a complete Cauchy hypersurface is
geodesically connected.
\end{theorem}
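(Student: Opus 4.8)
The plan is to verify that a globally hyperbolic GPW with a complete Cauchy hypersurface satisfies the hypotheses of Theorem \ref{tm}, and then check that condition \emph{(ii)} holds for any pair of points, so that Theorem \ref{tm} delivers geodesic connectedness. First I would observe that on a GPW the vector field $K=\partial_v$ is lightlike (since $\<\partial_v,\partial_v\>_L=0$ by the form of the metric) and Killing (the metric coefficients do not depend on $v$, so the flow of $\partial_v$ consists of isometries); moreover $\partial_v$ is complete because its integral curves are the lines $u,x$ constant, $v$ affine, which are defined for all values of the parameter. Thus $K=\partial_v$ is a complete lightlike Killing vector field. Since by assumption the GPW is globally hyperbolic with a complete Cauchy hypersurface $S$, all the structural hypotheses of Theorem \ref{tm} are in place.

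Next I would check condition \emph{(ii)} for arbitrary $p,q\in\elle$. Write $p=(x_p,u_p,v_p)$ and $q=(x_q,u_q,v_q)$. The key computation is $\<\dot\varphi,K(\varphi)\>_L=\<\dot\varphi,\partial_v\>_L=\dot u$ for any $C^1$ curve $\varphi=(x,u,v)$; this follows directly from the metric expression $\<\cdot,\cdot\>_L=\<\cdot,\cdot\>+2\,du\,dv+\h\,du^2$, since the $v$-component of $K$ pairs only with the $du$ factor. Hence condition \emph{(ii)} becomes the requirement that $p$ and $q$ be joined by a $C^1$ curve along which $\dot u$ has constant sign or vanishes identically. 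But this is trivially achievable: take any $C^1$ curve whose $u$-component is the affine interpolation $u(s)=u_p+s(u_q-u_p)$ (so $\dot u\equiv u_q-u_p$ has constant sign, or is $\equiv 0$ when $u_p=u_q$), and let the $x$- and $v$-components be any $C^1$ paths from $x_p$ to $x_q$ and from $v_p$ to $v_q$ respectively (such paths exist since $\M$ is connected, hence path-connected, and $\R$ is connected). Therefore \emph{(ii)} holds for every pair of points, and Theorem \ref{tm} yields that every pair of points of $\elle$ is geodesically connected.

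The only genuinely non-routine point is making sure the metric identity $\<\dot\varphi,\partial_v\>_L=\dot u$ is correct and that it is this quantity — rather than, say, $\dot v$ — which plays the role of $\<\dot\varphi,K(\varphi)\>_L$ in condition \emph{(ii)}; once that is confirmed, the freedom to choose the $u$-component monotone is what makes \emph{(ii)} automatic. I do not expect any real obstacle: unlike the counterexamples of Section \ref{s8}, where $\<\delta(x),\dot x\>$ was forced to change sign by topological or geometric constraints on the base, here the ``$u$'' direction is an honest factor $\R$ of the product, so one can always move monotonically in it. In short, the whole content of the theorem is that for GPWs condition \emph{(ii)} of Theorem \ref{tm} is vacuous.
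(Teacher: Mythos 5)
Your proposal is correct and follows essentially the same route as the paper: identify $K=\partial_v$ as a complete lightlike Killing vector field, compute $\langle\dot\varphi,\partial_v\rangle_L=\dot u$, and join $p$ to $q$ by a curve whose $u$--component is the affine interpolation so that condition \emph{(ii)} of Theorem \ref{tm} holds automatically. No gaps.
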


\begin{proof}
Let $(\elle,\<\cdot,\cdot\>_L)$ be a GPW. Clearly, $K=\partial_v$
is a complete lightlike Killing vector field on $\elle$. Take any
$p=(x_p,u_p,v_p), q=(x_q,u_q,v_q) \in \elle$, any curve $x=x(s)$
in $\M$ connecting $x_p$ to $x_q$, and denote $\Delta_u=u_q-u_p$
and $\Delta_v=v_q-v_p$. The curve $\varphi(s)=(x(s),\Delta_u\,
s,\Delta_v\, s)$ connects $p$ to $q$, and the scalar product
\[
\langle \dot{\varphi}, K(\varphi)\rangle_L = \dot u=\Delta_u
\]
has constant sign or is equal to 0. Therefore, the existence of a
geodesic connecting $p$ to $q$ follows from Theorem \ref{tm}.
\end{proof}

\begin{remark} {\em
To the authors it is not clear if any globally hyperbolic GPW
$(\elle,\<\cdot,\cdot\>_L)$ with $(\M,\<\cdot,\cdot\>)$ complete,
necessarily admits some complete Cauchy hypersurface\footnote{This
question is in connection with the following more general problem,
which goes beyond the scope of the present article: find general
conditions on a globally hyperbolic spacetime which ensure that it
admits some complete Cauchy hypersurface.}. If this was true, in
the hypotheses of Theorem \ref{c0} this last condition could be
replaced by the completeness of $(\M,\<\cdot,\cdot\>)$.}
\end{remark}


\end{document}